\numberwithin{equation}{section}
\newtheorem{theorem}{Theorem}[section]
\newtheorem{proposition}[theorem]{Proposition}
\newtheorem{lemma}[theorem]{Lemma}
\newtheorem{definition}[theorem]{Definition}
\newtheorem{remark}[theorem]{Remark}
\newtheorem{setup}[theorem]{Setup}
\newcommand{\ov}{\overline}
\newcommand{\ini}{\mathrm{in}}
\newcommand{\reg}{\mathrm{reg}}
\title[Regularity of Cohen-Macaulay binomial edge ideals]{Regularity of two classes of Cohen-Macaulay binomial edge ideals}
\author{Om Prakash Bhardwaj}
\address{Department of Mathematics, Indian Institute of Technology Bombay, Powai, Maharashtra-400076, India}
\email{om.prakash@math.iitb.ac.in}
\thanks{}
\author{Kamalesh Saha}
\address{Chennai Mathematical Institute, Siruseri, Chennai, Tamil Nadu 603103, India}
\email{ksaha@cmi.ac.in; kamalesh.saha44@gmail.com}
\thanks{}
\subjclass{13D02, 13H10, 13F65, 05E40, 05C25}   
\keywords{Binomial edge ideals, Castelnuovo-Mumford regularity, Cohen-Macaulay rings, chain of cycles, $r$-regular $r$-connected block}
\begin{document}
\begin{abstract}
Some recent investigations indicate that for the classification of Cohen-Macaulay binomial edge ideals, it suffices to consider biconnected graphs with some whiskers attached (in short, `block with whiskers'). This paper provides explicit combinatorial formulae for the Castelnuovo-Mumford regularity of two specific classes of Cohen-Macaulay binomial edge ideals: (i) chain of cycles with whiskers and (ii) $r$-regular $r$-connected block with whiskers. For the first type, we introduce a new invariant of graphs in terms of the number of blocks in certain induced block graphs, and this invariant may help determine the regularity of other classes of binomial edge ideals. For the second type, we present the formula as a linear function of $r$.
\end{abstract}

\maketitle






\section{Introduction}

In combinatorial commutative algebra, researchers often connect ideals in polynomial rings to various combinatorial objects, exploring their relevance and applications in other research areas. Generally, people investigate algebraic properties and invariants of these ideals in relation to the corresponding combinatorial objects. One notable class of ideals that has garnered significant interest in recent years is the binomial edge ideals of graphs. These ideals draw considerable attention due to their nice structure and properties. To begin, let us define binomial edge ideals.
\par 

Let $G$ be a simple graph with vertex set $V(G)=[n]=\{1,\ldots,n\}$ and edge set $E(G)$, where $E(G)$ is a collection of subsets of $V(G)$ with exactly two elements. Then the \textit{binomial edge ideal} of $G$, denoted by $J_{G}$, is an ideal of $S$ defined as
$$J_{G}:=\big<f_{ij} = x_{i}y_{j}-x_{j}y_{i}\mid \{i,j\}\in E(G)\,\, \text{with}\,\, i<j\big>,$$
where $S$ is the polynomial ring $\mathbb{K}[x_{1},\ldots,x_{n},y_{1},\ldots,y_{n}]$ over a field $\mathbb{K}$.
\par 

The study of binomial edge ideals began in 2010 with independent studies by Herzog et al. \cite{hhhrkara} and Ohtani \cite{ohtani}. Binomial edge ideals can be interpreted as a broader form of the determinantal ideals associated with the $2$-minors of a $2\times n$ matrix of indeterminates. One key motivation behind studying these ideals stems from their relevance to algebraic statistics. Mainly, these ideals appear in the study of `conditional independence statements' as demonstrated in \cite[Section 4]{hhhrkara}. 
\par 

Among all the studies regarding binomial edge ideals, two investigations stand out in their own way: (i) the Cohen-Macaulay property and (ii) the Castelnuovo-Mumford regularity (in short, regularity) of binomial edge ideals. Unlike monomial edge ideals of graphs, to our best knowledge, there are no instances in the literature where the Cohen-Macaulay property or the regularity of binomial edge ideals depends on the characteristics of the base field $\mathbb{K}$. 
\par 

A significant body of research has focused on understanding Cohen-Macaulay binomial edge ideals (\cite{mont20}, \cite{bms_cmbip}, \cite{acc}, \cite{ehh_cmbin}, \cite{hhhrkara}, \cite{ks_cmunm15}, \cite{lmrr23}, \cite{raufrin14}, \cite{rin_cactus19}, \cite{rin_smaldev13}, \cite{ssgirth24}, \cite{sswhisker}), yet a comprehensive combinatorial characterization is still open. Notably, a recent paper \cite{acc} has made significant progress in this direction. Specifically, the authors of \cite{acc} introduced the concept of \textit{accessible} graphs (\cite[Definition 2.2]{acc}) to give a possible combinatorial interpretation of Cohen-Macaulay binomial edge ideals. They proved that if $J_{G}$ is Cohen-Macaulay, then $G$ must be accessible, and they proposed \cite[Conjecture 1.1]{acc} regarding the converse. This conjecture has been confirmed for several graph classes, including bipartite and chordal (see \cite{acc}, \cite{bmrs_smallgraphs24}, \cite{lmrr23}, \cite{sswhisker}).
\par

 In addition to investigating the Cohen-Macaulay property of binomial edge ideals, considerable efforts have been directed towards determining the regularity of binomial edge ideals for various classes of graphs (see \cite{das_binomsurvey23} and \cite{adamla23} for a comprehensive survey). Due to \cite[Conjecture 1.1]{acc}, we believe that the regularity of Cohen-Macaulay binomial edge ideals does not depend on the characteristic of the underlying field and may have some nice combinatorial description. To date, we know the exact formulae for the regularity of Cohen-Macaulay binomial edge ideals of block graphs \cite[Corollary 3.3]{jnr19} and bipartite graphs \cite[Theorem 4.7]{jk_regcmbip19}. Although there are some other classes of Cohen-Macaulay binomial edge ideals, which have been combinatorially classified, their regularity remains unexplored. This includes classes such as chordal graphs \cite{acc}, traceable graphs \cite{acc}, a chain of cycles attached with some whiskers \cite{lmrr23}, and $r$-regular $r$-connected block with whiskers \cite{sswhisker}.
\par 

In this paper, we explicitly give combinatorial formulae for the regularity of two classes of Cohen-Macaulay binomial edge ideals: (i) chain of cycles attached with some whiskers (classified in \cite{lmrr23}) and (ii) the class given in \cite{sswhisker}, which includes $r$-regular $r$-connected blocks attached with some whiskers. Mainly, we use the regularity lemma, the regularity formula for decomposable graphs and the regularity of certain block graphs to establish the upper bound of the regularity of the desired classes. In \cite[Corollary 2.7]{cv20}, Conca and Varbaro showed that for a graded ideal in a polynomial ring with a square-free initial ideal for some term order, the regularity of that ideal and its initial ideal are equal. We use this result by examining the initial ideals of binomial edge ideals to get the required lower bound of the regularity for the second class. The paper is structured as follows:
\par 

Section \ref{preli} is devoted to discussing the necessary prerequisites. Let $B$ be a chain of cycles (see \Cref{defchain}), and $\ov{B}$ be the graph after attaching some whisker to $B$. Then it has been proved in \cite[Theorem 3]{lmrr23} that $J_{\ov{B}}$ is Cohen-Macaulay if and only if $\ov{B}$ satisfies \cite[Setup 1]{lmrr23}. In \Cref{secchain}, we provide the exact combinatorial formula of the regularity for Cohen-Macaulay $J_{\ov{B}}$. The formula is based on the number of blocks in an induced block graph of $\ov{B}$ whose binomial edge ideal is Cohen-Macaulay. Let $H$ be a block graph (see \Cref{defblock}). Then $J_H$ is Cohen-Macaulay if and only if no vertex of $H$ belongs to more than two maximal cliques (see \cite[Theorem 1.1]{ehh_cmbin}). Again, if $H$ is a block graph with $J_H$ Cohen-Macaulay, then by \cite[Corollary 3.3]{jnr19}, we have $\mathrm{reg}(S/J_H)=b(H)$, where $b(H)$ denote the number of blocks (maximal cliques) in $H$. Now, for any simple graph $G$, let us define the following new invariant:
$$b(G):=\max\{b(H)\mid H\text{ is an induced block graph of }G \text{ with }J_{H}\text{ Cohen-Macaulay}\}.$$ 
The following is the main theorem of \Cref{secchain}.
\medskip

\noindent\textbf{Theorem \ref{thmchain}.} \textit{Let $G=\ov{B}$ with $J_G$ Cohen-Macaulay, where $B$ is a chain of cycles. Then 
$$\reg(S/J_G)=b(G).$$
}

\noindent In \Cref{secreg-connected}, we find the regularity of the class of Cohen-Macaulay binomial edge ideals given in \cite{sswhisker}. Let $K_m$ denote the complete graph on $m$ vertices. Corresponding to two complete graphs $K_m$ and $K_n$ and a positive integer $2\leq r\leq \min\{m,n\}$, the author in \cite{sswhisker} construct a graph $\ov{K_m\star_{r} K_n}$ such that $J_{\ov{K_m\star_{r} K_n}}$ is Cohen-Macaulay. Particularly, when $m=n=r$ it gives any $r$-regular $r$-connected block whose binomial edge ideal is Cohen-Macaulay after attaching some whiskers to it. The following gives the formula for the regularity of $S/J_G$, where $G=\ov{K_m\star_{r} K_n}$.
\medskip

\noindent\textbf{\Cref{thmstar2}, \ref{thmregcon}.} \textit{Let $G = \ov{K_m \star_r K_n}$. Then we have
\begin{align*}
    \reg(S/J_G)=
    \begin{cases}
        4 & \text{if $r=2$ and $m>2$ or $n>2$}\\
        3 & \text{if $r=m=n=2$}\\
        2r-1 & \text{if $r>2$}.
    \end{cases}
\end{align*}
}

\section{Preliminaries}\label{preli}

 We assume all graphs are simple and finite. Let $G$ be a graph. A graph $H$ is said to be an \textit{induced subgraph} of $G$ if $V(H)\subset V(G)$ and $E(H)=\{e\in E(G)\mid e\subset V(H)\}$. For $T\subset V(G)$, we write $G\setminus T$ to denote the induced subgraph of $G$ on the vertex set $V(G)\setminus T$, and also, we write $G[T]$ to mean the induced subgraph of $G$ on the vertex set $T$. For a vertex $v\in V(G)$, we say $\mathcal{N}_{G}(v)=\{u\in V(G)\mid \{u,v\}\in E(G)\}$ the \textit{neighbour set} of $v$ in $G$. If $\mathcal{N}_{G}(v)=\{u\}$, then $\{u,v\}\in E(G)$ 
is called a \textit{whisker} attached to $u$. The \textit{degree} of a vertex $v\in V(G)$ is $\vert \mathcal{N}_{G}(v)\vert$. A \textit{cycle} of length $n$ is a connected graph on $n$ vertices such that the degree of each vertex is $2$. A \textit{path} from $u$ to $v$ of length $n$ 
in $G$ is a sequence of vertices $u=v_{0},\ldots,v_{n}=v\in V(G)$, such that $\{v_{i-1},v_{i}\}\in E(G)$ for each $1\leq i\leq n$, and 
$v_{i}\neq v_{j}$ if $i\neq j$.
\medskip

A graph is said to be \textit{complete} if there is an edge between every pair of vertices, and we denote the complete graph on $n$ vertices by $K_{n}$. A \textit{clique} of a graph $G$ is an induced complete subgraph of $G$. A vertex $v\in V(G)$ is called a \textit{free vertex} of $G$ if the induced subgraph of $G$ on $\mathcal{N}_{G}(v)$ is complete. A graph $G$ is said to be a \textit{gluing} of $G_{1}$ and $G_{2}$ at the vertex $v$, if $G=G_{1}\cup G_{2}$ with $V(G_{1})\cap V(G_{2})=\{v\}$ such that $v$ is a free vertex of both $G_{1}$ and $G_{2}$. The notion of gluing was introduced in \cite{raufrin14} to study the Cohen-Macaulay property of binomial edge ideals. In literature, people also use to say $G$ is \textit{decomposable} into $G_{1}$ and $G_{2}$ instead of saying gluing and write $G=G_1\cup_{v} G_2$ to mean $G$ is a gluing of $G_1$ and $G_2$ at the vertex $v$. A vertex $v\in V(G)$ is said to be a \textit{cut vertex} of $G$ if the removal of $v$ from $G$ increases the number of connected components.

\begin{definition}\label{defblock}{\rm
    A graph $G$ is said to be \textit{chordal} if $G$ has no induced cycle of length greater than $3$. If any two distinct maximal cliques of a chordal graph $G$ intersect in at most one vertex, then $G$ is known as a \textit{block} graph. By a \textit{block} in a block graph, we mean a maximal clique.
    }
\end{definition}

\begin{theorem}[{\cite[Theorem 1.1]{ehh_cmbin}}]\label{thmcmblock}
    Let $G$ be a block graph. Then $J_G$ is Cohen-Macaulay if and only if each vertex of $G$ is the intersection of at most two maximal cliques
\end{theorem}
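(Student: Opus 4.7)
The plan is to prove the two implications of the equivalence separately. The ``only if'' direction rests on checking unmixedness of $J_G$ using the primary decomposition from \cite{hhhrkara}, while the ``if'' direction proceeds by a gluing induction in the spirit of Rauf--Rinaldo \cite{raufrin14}.

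For the \emph{only if} direction, I would argue the contrapositive: suppose some vertex $v$ of $G$ lies in three (or more) maximal cliques $B_{1}, B_{2}, B_{3}$. Since $G$ is a block graph, these cliques pairwise meet only at $v$, so the sets $B_{i}\setminus\{v\}$ lie in three distinct connected components of $G\setminus\{v\}$; letting $c(G\setminus\{v\})$ denote the number of components, one has $c(G\setminus\{v\})\geq 3$. Recall from \cite{hhhrkara} that the minimal primes of $J_G$ are the ideals $P_{T}(G)$ indexed by the cut sets $T$ of $G$, and that $P_{T}(G)$ has height $|T|+n-c(G\setminus T)$. Both $T=\emptyset$ and $T=\{v\}$ are cut sets (the latter because $v$ is a cut vertex), with heights $n-1$ and $1+n-c(G\setminus\{v\})\leq n-2$, respectively. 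Thus $J_G$ has minimal primes of different heights, fails to be unmixed, and is not Cohen-Macaulay.

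For the \emph{if} direction, I would induct on the number of maximal cliques of $G$. The base case is a single clique $G=K_m$; the ideal $J_{K_m}$ is the ideal of $2\times 2$ minors of a generic $2\times m$ matrix, which is classically Cohen-Macaulay. For the inductive step, the block-cut-vertex tree of $G$ has at least two nodes and therefore a leaf, which yields a maximal clique $B$ of $G$ meeting the rest of $G$ at a single cut vertex $v$. Under the hypothesis, $v$ lies in exactly two maximal cliques, so setting $G_{1}=G[B]$ and $G_{2}=G\setminus (B\setminus\{v\})$ exhibits $v$ as a free vertex of both $G_{1}$ and $G_{2}$; that is, $G=G_{1}\cup_{v}G_{2}$. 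By the gluing criterion of Rauf--Rinaldo \cite{raufrin14}, $J_G$ is Cohen-Macaulay if and only if $J_{G_{1}}$ and $J_{G_{2}}$ both are. The first is Cohen-Macaulay by the base case, and $G_{2}$ is a block graph with one fewer maximal clique, still satisfying the hypothesis, so $J_{G_{2}}$ is Cohen-Macaulay by induction.

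The principal obstacle is the combinatorial bookkeeping for the induction: one must verify (i) that every block graph satisfying the hypothesis contains a leaf block whose unique cut vertex lies in exactly two maximal cliques, and (ii) that $G_{2}$ inherits the hypothesis. Both facts follow from the tree structure of the block-cut-vertex decomposition, but they must be spelled out to close the argument rigorously; in particular one needs that deleting the interior vertices of a pendant block does not merge two previously distinct maximal cliques through any remaining vertex, which is immediate from the definition of block graph.
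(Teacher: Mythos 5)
This theorem is quoted in the paper as a preliminary from \cite{ehh_cmbin}; the paper itself contains no proof, so there is nothing internal to compare your argument against. Judged on its own, your proposal is correct. The ``only if'' direction via unmixedness is exactly the classical argument: $P_{\emptyset}(G)$ and $P_{\{v\}}(G)$ are both minimal primes of $J_G$ (the latter because a vertex lying in at least two blocks is a cut vertex, so $\{v\}$ is a cut set), and their heights differ once $c(G\setminus\{v\})\geq 3$. The ``if'' direction via repeated gluing at a leaf block and the Rauf--Rinaldo decomposability criterion is a clean modern route; note that the original proof in \cite{ehh_cmbin} predates \cite{raufrin14} and instead runs a depth chase on the exact sequence \eqref{eqexact}, so your version is genuinely a streamlining, trading a homological computation for a citation of the gluing theorem. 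Two points deserve to be made explicit to close the argument. First, the block-cut-vertex tree is a device for connected graphs; for disconnected $G$ one reduces to components, which is harmless since $J_G$ is then a sum of binomial edge ideals in disjoint sets of variables and Cohen--Macaulayness passes through the tensor product. Second, the claim that $B_1\setminus\{v\}$, $B_2\setminus\{v\}$, $B_3\setminus\{v\}$ land in three distinct components of $G\setminus\{v\}$ needs the standard fact that in a block graph the maximal cliques are precisely the blocks: a path between two of them avoiding $v$ would close a cycle through $v$, placing both cliques inside a common biconnected (hence complete) subgraph and forcing them to coincide. With those two remarks spelled out, your induction and its bookkeeping (the leaf block, the free vertex $v$, and the inheritance of the hypothesis by $G_2$) all go through.
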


\begin{remark}\label{remregblock}{\rm
    Let $G$ be a gluing of two graphs $G_1$ and $G_2$. Then by \cite[Theorem 3.1]{jnr19}, $$\reg(S/J_G)=\reg(S_1/J_{G_1})+\reg(S_2/J_{G_2}),$$
    where $S_1$ and $S_2$ are the corresponding polynomial rings of $J_{G_1}$ and $J_{G_2}$. As an application \cite[Corollary 3.3]{jnr19}, we get $\reg(S/J_G)=b(G)$ if $G$ is a block graph with $J_G$ Cohen-Macaulay. 
    }
\end{remark}

\begin{definition}{\rm
Let $v\not\in V(G)$. Then the \textit{cone} of $v$ on $G$, denoted by $\mathrm{cone}(v,G)$, is the graph with vertex set $V(G)\cup \{v\}$ and edge set $E(G)\cup \{\{u,v\}\mid u\in V(G)\}$.
}
\end{definition}

\noindent The study of binomial edge ideals of the cone on a graph was initiated in \cite{raufrin14}. Later, the concept of cone had been generalized to the join of two graphs in \cite{mkreg3}.
\medskip

Let $G$ be a graph and $v$ be any vertex of $G$. Let us define a graph $G_v$ such that
$$V(G_v)=V(G)\,\text{ and }\, E(G_v)=E(G)\cup \{\{i,j\}\mid i,j\in\mathcal{N}_{G}(v), i\neq j\}.$$
The graph $G_v$ plays an important role in the study of binomial edge ideal. In particular, thanks to \cite[Lemma 4.8]{ohtani}, we have the following exact sequence, which has been used extensively in many papers.
\begin{align}\label{eqexact}
   0 \longrightarrow \frac{S}{J_G} \longrightarrow \frac{S}{\langle J_{G \setminus \{v\}},x_v,y_v\rangle} \oplus \frac{S}{J_{G_v}} \longrightarrow \frac{S}{\langle J_{G_v \setminus \{v\}},x_v,y_v\rangle}\longrightarrow 0. 
\end{align}

\noindent\textbf{Note:} By saying an ideal $I$ of $R$ is Cohen-Macaulay, we mean the quotient ring $R/I$ is Cohen-Macaulay. Whenever we write $S/J_G$, we mean $S$ to be the corresponding polynomial ring of the binomial edge ideal $J_G$.

\section{Regularity of chain of cycles with whiskers}\label{secchain}

In this section, we determine the combinatorial formula for the regularity of the class of Cohen-Macaulay binomial edge ideals provided in \cite{lmrr23}. In particular, we find the regularity of the Cohen-Macaulay binomial edge ideal of a chain of cycles with whiskers.\par 

A subgraph $H$ of $G$ spans $G$ if $V(H) = V(G)$. In a connected graph $G$, a chord of a tree $H$ that spans $G$ is an edge of $G$ not in $H$. The number of chords of any spanning tree of a connected graph $G$, denoted by $m(G)$, is called the \textit{cycle rank} of $G$, and it is
given by $m(G) = \vert E(G)\vert -\vert V(G)\vert + 1$.

\begin{definition}[{\cite[Definition 3]{lmrr23}}]\label{defchain}
    {\rm Let $B$ be a block with $m(B) = r$ such that $B=\cup_{i=1}^{r}D_i$, where $D_i$'s are
cycles, $E(D_i) \cap E(D_{i+1}) = E(P)$, where $P$ is a path, and for all $j\neq i-1,i,i+1$, $E(D_i) \cap E(D_j)=\emptyset$. We call $B$ a chain of cycles.
}
\end{definition}

\begin{definition}{\rm
    By a \textit{block} $B$ in a graph $G$, we mean a maximal biconnected induced subgraph of $G$. Let $B$ be a block of a graph $G$ and $W=\{w_1,\ldots,w_r\}$ be the set of cut vertices of $G$ belonging to $V(B)$. Then we define a graph $\ov{B}$, called \textit{blcok with whiskers} with respect to $G$, as follows:
    \begin{enumerate}
        \item[$\bullet$] $V(\ov{B})=V(B)\cup \{f_1,\ldots,f_r\}$;
        \item[$\bullet$] $E(\ov{B})=E(B)\cup \{\{w_i,f_i\}\mid 1\leq i\leq r\}.$
    \end{enumerate}
The edges $\{w_i,f_i\}$ are called whiskers attached to $B$. Moreover, when the graph $G$ is not mentioned, by a `block with whiskers', we mean a biconnected graph $B$ with some whiskers attached to it and denote it by $\ov{B}$.
    }
\end{definition}

In \cite{lmrr23}, it has been completely characterized when $J_{\ov{B}}$ is Cohen-Macaulay for a chain of cycles $B$. If $B$ is a chain of cycles, then $J_{\ov{B}}$ is Cohen-Macaulay if and only if $\ov{B}$ satisfies \cite[Setup 1]{lmrr23}. Due to \cite[Proposition 3.2]{ssgirth24}, if we replace the $D_1$ in \cite[Setup 1]{lmrr23} by any complete graph, then also $J_{\ov{B}}$ is Cohen-Macaulay. Thus, we will consider a bigger class of Cohen-Macaulay binomial edge ideals, which contains the class chain of cycles with whiskers. In particular, we consider those blocks with whiskers, which satisfy the following setup (see \Cref{fig1}).

\begin{setup}\label{setup}
Let $\ov{B}$ be a block with whiskers, where $B = \cup_{i=1}^r D_i$ is a chain of cycle with a possible replacement of $D_1$ by a complete graph, satisfying the following properties:
\begin{enumerate}
\item[(i)] $D_1\in\{K_n,C_4\mid n\geq 3\}$ and $D_i \in \{C_3,C_4\}$ for each $i\geq 2$;
\item[(ii)] If $D_i = C_4$, then $D_{i+1} = C_3$;
\item[(iii)] $E(D_i) \cap E(D_{i+1}) = \{w_i,u_i\}$, where $w_i$ is a cut vertex and $u_i$ is not a cut vertex;
\item[(iv)] $\{w_i, w_{i+1}\} \in E(D_{i+1})$ (resp. $\{u_i, u_{i+1}\} \in E(D_{i+1}))$ or $w_i = w_{i+1}$ (resp. $u_i = u_{i+1}$);
\item[(v)] If $D_1 = C_4$ with $V(D_1) = \{w_0,w_1,u_0,u_1\}$ and $ \{w_0,w_1\}, \{u_0,u_1\} \in E(D_1)$, then $w_0$ and $w_1$ are cut vertices, whereas $u_0$ and $u_1$ are not cut vertices;
\item[(vi)] If $D_r = C_4$ with $V(D_r) = \{w_{r-1}, w_{r}, u_{r-1}, u_{r}\}$ and $\{w_{r-1}, w_{r}\}, \{u_{r-1}, u_{r}\} \in E(D_r)$, then $w_{r-1}$ and $w_{r}$ are cut vertices, whereas $u_{r-1}$ and $u_{r}$ are not cut vertices;
\item[(vii)] If $v \in V(B)$ is such that $v$ belongs to four $D_{i}$'s or $v$ belongs to three $D_{i}$'s with $v$ a vertex of a $C_4$, then $v$ is a cut vertex of $\ov{B}$.
\end{enumerate}
\end{setup}


\begin{figure}[h]
\begin{tikzpicture}
 [scale=1]
 
\filldraw[black] (0,0) circle (2pt)node[anchor=east]{};
\filldraw[black] (2,0) circle (2pt)node[anchor=south]{};
\filldraw[black] (0,2) circle (2pt)node[anchor=north]{};
\filldraw[black] (2,2) circle (2pt)node[anchor=south]{};
\filldraw[black] (4,0) circle (2pt)node[anchor=north]{};
\filldraw[black] (4,2) circle (2pt)node[anchor=north]{};
\filldraw[black] (5,0) circle (2pt)node[anchor=south]{};
\filldraw[black] (6,0) circle (2pt)node[anchor=south]{};
\filldraw[black] (6,2) circle (2pt)node[anchor=south]{};
\filldraw[black] (7,0) circle (2pt)node[anchor=south]{};
\filldraw[black] (8,0) circle (2pt)node[anchor=south]{};
\filldraw[black] (8,2) circle (2pt)node[anchor=south]{};

\filldraw[black] (2,3.5) circle (2pt)node[anchor=south]{};
\filldraw[black] (4,3.5) circle (2pt)node[anchor=south]{};
\filldraw[black] (6,3.5) circle (2pt)node[anchor=south]{};
\filldraw[black] (8,3.5) circle (2pt)node[anchor=south]{};

\draw[black] (0,0) -- (2,0);
\draw[black] (0,0) -- (0,2);
\draw[black] (0,0) -- (2,2);
\draw[black] (2,0) -- (0,2);
\draw[black] (2,0) -- (2,2);
\draw[black] (2,2) -- (0,2);
\draw[black] (4,0) -- (4,2);
\draw[black] (2,0) -- (4,0);
\draw[black] (2,2) -- (4,2);
\draw[black] (4,2) -- (5,0);
\draw[black] (4,0) -- (5,0);
\draw[black] (5,0) -- (6,0);
\draw[black] (4,2) -- (6,0);
\draw[black] (4,2) -- (6,2);
\draw[black] (6,0) -- (6,2);
\draw[black] (7,0) -- (6,2);
\draw[black] (7,0) -- (6,0);
\draw[black] (8,0) -- (8,2);
\draw[black] (8,0) -- (7,0);
\draw[black] (8,2) -- (6,2);

\draw[black] (2,3.5) -- (2,2);
\draw[black] (4,3.5) -- (4,2);
\draw[black] (6,3.5) -- (6,2);
\draw[black] (8,3.5) -- (8,2);

\end{tikzpicture}
\caption{A graph $\ov{B}$ satisfying \Cref{setup}.}

\label{fig1}
\end{figure}

\noindent \textbf{Note:} Due to \Cref{remregblock}, we may assume $G$ is not a decomposable graph satisfying \Cref{setup}, i.e. $G$ is not a complete graph. Corresponding to a cut vertex $w$ of $G$, we denote the other vertex of the whisker attached to $w$ by $f_w$.
\medskip


Let $G = \ov{B}$, where $\ov{B}$ satisfies Setup \ref{setup}. We want to find the regularity of $S/J_G$, and for that purpose, we will define some terminologies to get a better picture of our proof. Define 
$$S(G):= \{D_i \mid D_i ~ \text{contains exactly one cut vertex of} ~ G \}.$$  
Note that every member of $S(G)$ is a complete graph, and at most, one member of $S(G)$ can be a complete graph on more than three vertices. From the structure of $G$ as mentioned in Setup \ref{setup}, it is clear that after removing some cut vertices from $G$, we will get a block graph whose binomial edge ideal is Cohen-Macaulay. For example, remove all the cut vertices, and we will left with a path graph or gluing of a path graph and a complete graph. Thus, we define the following: 
\begin{align*}
    \mathbf{CMB}(G) := \{ H \mid\,\, & H = G \setminus W ~ \text{for some set of cut vertices $W$ of $G$}\\ 
    & \text{and $H$ is a block graph with $J_H$ Cohen-Macaulay} \}.
\end{align*}
    
\noindent For any block graph $H$, let $b(H)$ denote the number of blocks in $H$. Now, let us define $b(G)$ for any graph $G$ satisfying Setup \ref{setup} as follows:
 $$b(G):= \mathrm{max}\{b(H) \mid H \in \mathbf{CMB}(G)\}.$$

\begin{remark}\label{remark1}{\rm
Let $H \in \mathbf{CMB}(G)$ and $H = G\setminus W$ for some collection $W$ of cut vertices of $G$. Then, the following are easy to verify from \Cref{thmcmblock} and \Cref{setup}.
\begin{enumerate}
\item[(i)] If $w \in V(G)$ is a cut vertex of $G$ such that $w$ belongs to more than one member of $S(G)$, then $w \in W$.
\item[(ii)] If $D_i= C_4$ for some $i$, then at least one of $w_{i-1}$ or $w_{i}$ belongs to $W$.
\item[(iii)] If $D_i=C_3$ contains two cut vertices of $G$, then $D_{i-1},D_{i+1}\in S(G)$ due to the condition (vii) of Setup \ref{setup}. Therefore, if $w$ is a cut vertex of $G$ with $w \notin W$, then $N_G(w) \cap W(G) \subset W$ as $H\in \mathbf{CMB}(G)$, where $W(G)$ denotes the set of all cut vertices of $G$ (i.e., if $w$ is a cut vertex of $G$ which also belongs to $H$, then the neighbour cut vertices of $w$ should belong to $W$).  
\end{enumerate}
}
\end{remark}

\begin{figure}[h]
\centering
 \begin{minipage}{.5\textwidth}
 \centering
\begin{tikzpicture}
 [scale=1]
 
\filldraw[black] (0,0) circle (2pt)node[anchor=east]{$u_0$};
\filldraw[black] (2,0) circle (2pt)node[anchor=south]{};
\filldraw[black] (0,2) circle (2pt)node[anchor=east]{$w=w_0$};
\filldraw[black] (2,2) circle (2pt)node[anchor=west]{$w_1$};
\filldraw[black] (4,0) circle (2pt)node[anchor=north]{};

\filldraw[black] (0,3.5) circle (2pt)node[anchor=east]{$f_{w_0}$};
\filldraw[black] (2,3.5) circle (2pt)node[anchor=south]{};

\filldraw[black] (4.5,1) circle (1pt)node[anchor=south]{};
\filldraw[black] (5,1) circle (1pt)node[anchor=south]{};
\filldraw[black] (5.5,1) circle (1pt)node[anchor=south]{};

\draw[black] (0,0) -- (2,0);
\draw[black] (0,0) -- (0,2);
\draw[black] (2,0) -- (2,2);
\draw[black] (2,2) -- (0,2);
\draw[black] (2,2) -- (4,0);
\draw[black] (2,0) -- (4,0);

\draw[black] (2,3.5) -- (2,2);
\draw[black] (0,3.5) -- (0,2);

\end{tikzpicture}
\caption{A graph $\ov{B}$ considered in \textbf{Case-1} of \Cref{lembgw<bg}.}
\label{fig2}
\end{minipage}%
\begin{minipage}{.5\textwidth}
    \centering
\begin{tikzpicture}
 [scale=1]
 
\filldraw[black] (0,0) circle (2pt)node[anchor=east]{};
\filldraw[black] (2,0) circle (2pt)node[anchor=south]{};
\filldraw[black] (0,2) circle (2pt)node[anchor=north]{};
\filldraw[black] (2,2) circle (2pt)node[anchor=west]{$w=w_1$};
\filldraw[black] (4,0) circle (2pt)node[anchor=west]{$u=u_3$};
\filldraw[black] (3,0) circle (2pt)node[anchor=north]{};

\filldraw[black] (2,3.5) circle (2pt)node[anchor=west]{$f_{w}$};

\filldraw[black] (4.5,1) circle (1pt)node[anchor=south]{};
\filldraw[black] (5,1) circle (1pt)node[anchor=south]{};
\filldraw[black] (5.5,1) circle (1pt)node[anchor=south]{};

\draw[black] (0,0) -- (2,0);
\draw[black] (0,0) -- (0,2);
\draw[black] (2,0) -- (2,2);
\draw[black] (2,2) -- (0,2);
\draw[black] (2,2) -- (0,0);
\draw[black] (2,0) -- (0,2);
\draw[black] (2,2) -- (4,0);
\draw[black] (2,0) -- (3,0);
\draw[black] (4,0) -- (3,0);
\draw[black] (2,2) -- (3,0);

\draw[black] (2,3.5) -- (2,2);

\end{tikzpicture}
\caption{A graph $\ov{B}$ considered in \textbf{Case-2} of \Cref{lembgw<bg}.}

\label{fig3}
\end{minipage}
\end{figure}

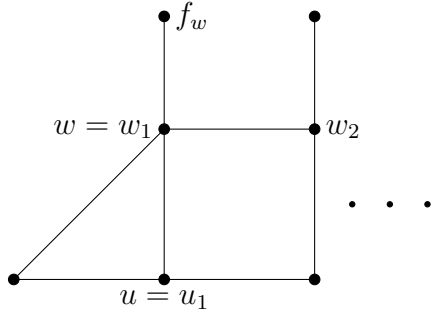
\begin{figure}[h]
\begin{tikzpicture}
 [scale=1]
 
\filldraw[black] (0,0) circle (2pt)node[anchor=east]{};
\filldraw[black] (2,0) circle (2pt)node[anchor=north]{$u=u_1$};
\filldraw[black] (2,2) circle (2pt)node[anchor=east]{$w=w_1$};
\filldraw[black] (4,0) circle (2pt)node[anchor=west]{};
\filldraw[black] (4,2) circle (2pt)node[anchor=west]{$w_2$};
\filldraw[black] (4,3.5) circle (2pt)node[anchor=north]{};

\filldraw[black] (2,3.5) circle (2pt)node[anchor=west]{$f_{w}$};

\filldraw[black] (4.5,1) circle (1pt)node[anchor=south]{};
\filldraw[black] (5,1) circle (1pt)node[anchor=south]{};
\filldraw[black] (5.5,1) circle (1pt)node[anchor=south]{};

\draw[black] (0,0) -- (2,0);
\draw[black] (2,0) -- (2,2);
\draw[black] (2,2) -- (0,0);
\draw[black] (2,2) -- (4,2);
\draw[black] (2,0) -- (4,0);
\draw[black] (4,0) -- (4,2);

\draw[black] (2,3.5) -- (2,2);
\draw[black] (4,3.5) -- (4,2);

\end{tikzpicture}
\caption{A graph $\ov{B}$ considered in \textbf{Case-3} of \Cref{lembgw<bg}.}
\label{fig4}
\end{figure}

\begin{lemma}\label{lembgw<bg}
Let $G$ be a graph satisfying Setup \ref{setup}. If $w$ is the first cut vertex of $G$ (i.e., $w \in V(D_1)$ and if $D_1 \cong C_4$, then $w = w_0$), then $b(G_w) < b(G).$
\end{lemma}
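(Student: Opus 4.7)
The plan is to prove the stronger claim that for every $H' \in \mathbf{CMB}(G_w)$, there exists some $H \in \mathbf{CMB}(G)$ with $b(H) \geq b(H') + 1$, whence $b(G) > b(G_w)$. The foundational observation is that $V(G_w) = V(G)$ and $E(G_w) \supseteq E(G)$, so vertex deletion in $G_w$ cannot produce more connected components than in $G$; hence every cut vertex of $G_w$ is already a cut vertex of $G$. Thus if $H' = G_w \setminus W'$ with $W'$ a set of cut vertices of $G_w$, then $W'$ is in fact a set of cut vertices of $G$, and the set $S_w := (\{w\} \cup \mathcal{N}_G(w)) \setminus W'$ induces a clique in $H'$, which we will call the \emph{merged clique at $w$}.

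Next, we split into cases along the structure of $D_1$, following the three scenarios depicted in Figures \ref{fig2}, \ref{fig3}, and \ref{fig4}. Case~1: $D_1 = K_n$ with $n \geq 3$ and $w$ having no neighbour in $B$ outside $D_1$ (so $w$ is a ``corner'' cut vertex, attached only to the whisker $f_w$). Case~2: $D_1 = K_n$ with $w$ a cut vertex shared with a subsequent block of the chain. Case~3: $D_1 \in \{C_3, C_4\}$ with $w$ a shared cut vertex, where in the $C_4$ subcase $w = w_0$ by hypothesis.

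In each case, we build $H = G \setminus W$ from $H' = G_w \setminus W'$ by a small modification of $W'$: if $w \notin W'$ we take $W := W'$, and if $w \in W'$ we take $W := (W' \setminus \{w\}) \cup \{w_j\}$ for a carefully chosen cut vertex $w_j$ in the next block (so as to preserve the block-graph structure). Using \Cref{thmcmblock} and \Cref{remark1}, one verifies that $H$ is a block graph with $J_H$ Cohen-Macaulay, i.e. $H \in \mathbf{CMB}(G)$. The block count is then compared as follows: the merged clique at $w$ is a single block of $H'$, but in $H$ it decomposes into at least two maximal cliques (typically the pendant whisker edge $\{w,f_w\}$ together with a maximal clique drawn from $D_1$), which yields $b(H) \geq b(H') + 1$.

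The main obstacle, I expect, is Case~3 when $D_1 = C_4$: the added edges in $G_{w_0}$ create the new edge $\{w_1, u_0\}$, so together with $u_1$ the vertices $\{w_0, w_1, u_0, u_1, f_{w_0}\}$ form a biconnected subgraph of $G_{w_0}$ that is not a clique, and the merged clique at $w_0$ therefore does not cleanly capture the first biconnected component of $G_{w_0}$. Handling this correctly requires a delicate interplay with conditions (v)--(vii) of \Cref{setup} --- in particular the constraint that $u_0, u_1$ are never cut vertices --- in order to verify that the constructed $W$ still produces an $H \in \mathbf{CMB}(G)$ realising the required extra block.
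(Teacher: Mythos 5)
Your overall strategy---fix an optimal $H'=G_w\setminus W'\in\mathbf{CMB}(G_w)$, note that $W'$ is also a set of cut vertices of $G$, and modify $W'$ so that deleting it from $G$ breaks the merged clique at $w$ into more blocks---is exactly the strategy of the paper, and your opening observation that every cut vertex of $G_w$ is a cut vertex of $G$ is correct and genuinely needed. But as written the proof has gaps. First, the branch ``$w\in W'$'' is vacuous: in $G_w$ the closed neighbourhood of $w$ is a clique, so $w$ is a free vertex of $G_w$ and in particular not a cut vertex of $G_w$; your recipe therefore always reduces to $W:=W'$. Second, $W:=W'$ does not always work. If $w$ belongs to $k\geq 2$ members of $S(G)$ (the situation of \Cref{fig3}, i.e.\ $w=w_1=\cdots=w_k$), then in $G\setminus W'$ the vertex $w$ still lies in the whisker $\{w,f_w\}$ and in each of the $k$ cliques coming from $D_1,\ldots,D_k$, hence in at least three maximal cliques; by \Cref{thmcmblock} the ideal $J_{G\setminus W'}$ is then not Cohen-Macaulay, so $G\setminus W'\notin\mathbf{CMB}(G)$. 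The paper's Case 2 instead takes $W=W'\cup\{w\}$ (legitimate, since $w$ is a cut vertex of $G$) and gains $k-1$ blocks by a different count. This configuration is missing from your case division, whose Case 1 ($w$ has no neighbour in $B$ outside $D_1$) essentially never occurs, since the first cut vertex $w_1$ always lies in $V(D_2)$ when $r\geq 2$.

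Finally, the case you yourself flag as the main obstacle ($D_1=C_4$, $w=w_0$) is left unresolved, yet it contains the key mechanism: by Remark \ref{remark1}(i) the vertex $w_1$ must belong to $W'$, because it lies in more than one member of $S(G_w)$. This forces $H'$ to decompose as a gluing at $u_0$ of the triangle $G_w[\{u_0,w_0,f_{w_0}\}]$ with the remainder $H_2$, and $G\setminus W'$ replaces that triangle by the path on $\{u_0,w_0,f_{w_0}\}$ with edges $\{u_0,w_0\}$ and $\{w_0,f_{w_0}\}$, which is exactly where the extra block comes from; without invoking $w_1\in W'$, the biconnected piece you describe on $\{w_0,w_1,u_0,u_1,f_{w_0}\}$ indeed obstructs the bookkeeping. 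So the skeleton is right, but the construction of $H$ must vary with the three structural cases, and two of the three verifications (including the one you identify as hardest) are not carried out.
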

\begin{proof}
We will break the proof into three cases:\par 

\noindent \textbf{Case-1:} Suppose $D_1 = C_4$ (see \Cref{fig2}). Let $b(G_w) = b(H)$ for some $H \in \mathbf{CMB}(G_w)$. Thus, $H$ has to be of the form $G_w \setminus W$, where $W$ is a collection of cut vertices of $G_w$. Note that $w_1$ belongs to more than one member of $S(G_w)$. Thus, by Remark \ref{remark1}(i), we have $w_1 \in W$. Therefore, $H$ can be viewed as a gluing of two graphs $H_1$ and $H_2$, where $H_1 = G_w[\{u_0,w_0,f_{w_0}\}] \cong C_3$ and $V(H_1) \cap V(H_2) = \{u_0\}$. Now, consider the graph $H' = G \setminus W$. From the structure of $G$ and $G_w$ it is easy to observe that $H'$ is a gluing of $H_1'$ and $H_2$ at $u_{0}$, where $V(H_1') = \{u_0,w_0,f_{w_0}\}$ and $E(H_1') = \{\{u_0,w_0\}, \{w_0,f_{w_0}\}\}$. This means that $H_1' = G[\{u_0,w_0,f_{w_0}\}]$ and $V(H_1') \cap V(H_2) = \{u_0\}$. By \Cref{thmcmblock}, $H'$ is a block graph with $J_{H'}$ Cohen-Macaulay, and thus, $H' \in \mathbf{CMB}(G)$. Also, we have $b(H') = b(H'_{1})+b(H_2)=2+b(H_2)=1+b(H_1)+b(H_2)= b(H)+1$. This implies $b(G) \geq b(H') = b(H)+1 = b(G_w)+1$. Hence, we get $b(G) > b(G_w)$.\par 

\noindent \textbf{Case-2:} Suppose $w$ belongs to more than one members of $S(G)$ (see \Cref{fig3}). Then 
$$\{D_i\mid w\in V(D_i)\}\cap S(G)=\{D_1, \ldots, D_k\}$$
for some $k \geq 2$.
By definition, the induced subgraph of $G_w$ on $N_G[w]$ is a complete graph, i.e. $G_{w}[N_G[w]]$ is a complete graph. Let $b(G_w) = b(H)$ for some $H \in \mathbf{CMB}(G_w)$. Then $H$ has to be of the form $G_w \setminus W$ for some set of cut vertices $W$ of $G_w$. If $G_w$ is complete, then $W = \emptyset$ and $H = G_w$. In this case, $b(G_w) = b(H) = 1$ and $b(G) = b(G \setminus \{w\}) = k$. Therefore, $b(G) > b(G_w)$ as $k \geq 2$. Now, let us assume that $G_w$ is not a complete graph. Note that $w_1 = w_2 = \ldots = w_k = w$ in $G$, and so, $w_{k+1}$ is the first cut vertex of $G_w$. From the structure of $G$, it is easy to observe that $G$ and $G_{w}$ both are indecomposable graphs. Thus, for any choice of $D_{k+1}$, $w_{k+1}$ belongs to more than one members of $S(G_w)$. Therefore, we have $w_{k+1} \in W$ due to Remark \ref{remark1}(i). Thus, $H$ can be viewed as a gluing of $H_1$ and $H_2$ at the vertex $u=u_{k}$, where $H_1$ is a complete graph on $N_G[w]\setminus \{w_{k+1}\}$. Now, we consider the graph $H' = G \setminus (W \cup \{w\}).$ Then $H'$ is a gluing of $H_1'$ and $H_2$, where $H_1'$ is $G[N_G[w] \setminus \{w, f_w, w_{k+1}\}].$ This means that $H_1'$ is a gluing of $k$ complete graphs among which at least $k-1$ complete graphs are isomorphic to $K_2$. Therefore, $H' \in \mathbf{CMB}(G)$ with $b(H') = b(H) + k - 1.$ Since $k \geq 2$, $b(H') > b(H)$. Hence, we have $b(G) > b(G_w)$.\\
\noindent \textbf{Case-3:} Suppose $D_1 \neq C_4$ and $w$ does not belong to more than one member of $S(G)$ (see \Cref{fig4}). Therefore, from the structure of $G$ as in \Cref{setup}, it is easy to see that $D_1 \in S(G)$ and $D_2$ is either a $C_4$ or a $C_3$ which contains two cut vertices of $G$. Now, consider $G_w$ and let $b(G_w) = b(H) $ for some $H \in \mathbf{CMB}(G_w)$. Then $H$ has to be of the form $G_w \setminus W$, where $W$ is a collection of cut vertices of $G_w$. Since $G$ is not a decomposable graph, there exists $D_2$ and a cut vertex $w_2$ of $G$ such that $w_2 \in N_G(w)$. Therefore, $w_2$ should belong to more than one members of $S(G_w)$, and hence, $w_2 \in W$ by \Cref{remark1}(i). Let $u$ be the non-cut vertex belong to $E(D_1)\cap E(D_2)$, i.e. $u=u_1$. Then $H$ is a gluing of $H_1$ and $H_2$ at the vertex $u$, where $H_1= K_n$ for some $n\geq 4$ (precisely, $n=\vert V(D_1)\vert +1=\vert N_{G}[w]\vert -1$) and $H_2$ is a block graph with $J_{H_2}$ Cohen-Macaulay. Now, we consider the graph $H' = G \setminus W$. Then $H'$ is the gluing of $H_1'$ and $H_2$ at $u$, where $H_1' = K_2 \cup_w D_1$. Since $D_1$ is a complete graph, $H_{1}'$ is a block graph with $J_{H_{1}'}$ Cohen-Macaulay. Hence, $H'$ is a block graph with $J_{H'}$ Cohen-Macaulay by Theorem \ref{thmcmblock}, i.e. $H'\in\mathbf{CMB}(G)$. Now, $b(H') = b(H_{1}')+b(H_2)= 2+b(H_2)$. Also, $b(H) = b(H_1)+b(H_2)=1+b(H_2)$. Therefore, we have $b(G) \geq b(H') > b(H) = b(G_w)$.
\end{proof}

\begin{theorem}\label{thmchain}
Let $G = \ov{B}$ be the graph satisfying \Cref{setup}. Then $\mathrm{reg}(S/J_G) = b(G)$. In particular, if $B$ is a chain of cycles such that $J_{\ov{B}}$ is Cohen-Macaulay, then $\reg(S/J_{\ov{B}})=b(\ov{B})$.
\end{theorem}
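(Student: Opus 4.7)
The plan is to establish the two inequalities $\reg(S/J_G) \ge b(G)$ and $\reg(S/J_G) \le b(G)$ separately: the lower bound via an induced-subgraph argument, and the upper bound by induction using the exact sequence \eqref{eqexact} at the first cut vertex, with Lemma \ref{lembgw<bg} doing the crucial numerical work.

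\emph{Lower bound.} Any $H \in \mathbf{CMB}(G)$ is, by construction, an induced subgraph of $G$ (namely $G \setminus W$ for a suitable set $W$ of cut vertices of $G$). Invoking the standard monotonicity of regularity of binomial edge ideals under induced subgraphs, one has $\reg(S_H/J_H) \le \reg(S/J_G)$, where $S_H$ is the polynomial ring on the variables indexed by $V(H)$. Since $H$ is a Cohen--Macaulay block graph, Remark \ref{remregblock} gives $\reg(S_H/J_H) = b(H)$. Taking the supremum over $H \in \mathbf{CMB}(G)$ yields $b(G) \le \reg(S/J_G)$.

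\emph{Upper bound.} I would induct on $b(G)$ (with the base case essentially a complete graph, where both sides equal $1$). By Remark \ref{remregblock} we may assume $G$ is indecomposable, else we decompose and apply induction to each piece. Let $w$ be the first cut vertex of $G$ and apply the Ohtani exact sequence \eqref{eqexact} with $v = w$. The usual bound for a short exact sequence yields
\[
\reg(S/J_G) \le \max\bigl\{\reg(S/J_{G \setminus w}),\ \reg(S/J_{G_w}),\ \reg(S/J_{G_w \setminus w}) + 1\bigr\},
\]
and it suffices to show every summand is at most $b(G)$. For the middle term, Lemma \ref{lembgw<bg} gives $b(G_w) < b(G)$; in fact, the three-case analysis in its proof exhibits $G_w$ as a gluing of a complete graph on $N_G[w]$ with either a block graph or a strictly smaller graph of Setup \ref{setup} type, so the additivity of regularity under gluings (Remark \ref{remregblock}), the block-graph formula, and the induction hypothesis together give $\reg(S/J_{G_w}) \le b(G_w) \le b(G) - 1$.

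For the outer two terms I would perform the parallel case analysis on $G \setminus w$ and $G_w \setminus w$ driven by whether $D_1$ is $K_n$ or $C_4$ and by whether $w$ lies in several members of $S(G)$. In $G \setminus w$ the whisker endpoint $f_w$ becomes an isolated vertex (contributing nothing), and the surviving components split as a small piece from $D_1 \setminus w$ together with a graph that is either a strictly smaller Setup \ref{setup} example or a Cohen--Macaulay block graph; additivity and induction then bound $\reg(S/J_{G\setminus w})$ by $b(G)$. In $G_w \setminus w$ the added clique on $N_G(w)$ collapses the first block into a free-vertex gluing of a complete graph with a smaller Setup \ref{setup} or block subgraph, and the same tools yield $\reg(S/J_{G_w \setminus w}) \le b(G) - 1$, so the $+1$ does not overshoot.

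The principal obstacle is the combinatorial bookkeeping underlying these case distinctions: for each of $G \setminus w$, $G_w$, and $G_w \setminus w$ one must identify an induced Cohen--Macaulay block subgraph whose block count is exactly one less than $b(G)$, then match it against the inductive bound. The structural conditions (i)--(vii) of Setup \ref{setup} --- particularly that $C_4$'s are flanked by $C_3$'s, that $D_1 \in \{K_n, C_4\}$, and the constraints of Remark \ref{remark1} on cut vertices belonging to several $D_i$'s --- are precisely what force the block counts to decrement by the correct amount under removal of $w$ and formation of $G_w$, making the inductive step tight.
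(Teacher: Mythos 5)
Your strategy coincides with the paper's: the lower bound via induced-subgraph monotonicity plus the block-graph formula is exactly their argument, and the upper bound likewise runs through the exact sequence \eqref{eqexact} at the first cut vertex with \Cref{lembgw<bg} controlling the terms. (The paper inducts on the number of cut vertices rather than on $b(G)$; your variant would also terminate, since the relevant subsidiary graphs have strictly smaller $b$-value, but your base case is misidentified: Setup \ref{setup} graphs are taken to be indecomposable, hence never complete, and the true base case is a graph with exactly one cut vertex, which the paper handles as a cone via the join/cone regularity formula.)

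Two points in your sketch are genuine gaps rather than bookkeeping. First, your description of $G_w$ as ``a gluing of a complete graph on $N_G[w]$ with a block graph or a smaller Setup graph'' is false in general: the complete graph on $N_G[w]$ meets the rest of $G_w$ in two vertices (the non-cut vertex $u_k$ and the next cut vertex $w_{k+1}$), not in a single free vertex, and in Case 2 of \Cref{lembgw<bg} the paper explicitly notes that $G_w$ is indecomposable. The correct move is simply that $G_w$ again satisfies \Cref{setup} with fewer cut vertices and strictly smaller $b$, so the induction hypothesis applies to it directly. Second, and more seriously, the bound $\reg(S/\langle J_{G_w\setminus\{w\}},x_w,y_w\rangle)\le b(G)-1$ --- which you need so that the $+1$ coming from the exact sequence ``does not overshoot'' --- is asserted without justification. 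The induction hypothesis only yields $\reg\le b(G_w\setminus\{w\})$, and one must still prove $b(G_w\setminus\{w\})=b(G_w)$ (hence $<b(G)$ by \Cref{lembgw<bg}). This is a separate argument in the paper: it uses that $w$ is a free vertex of $G_w$ lying in a block on at least three vertices to set up a block-count-preserving correspondence between $\mathbf{CMB}(G_w)$ and $\mathbf{CMB}(G_w\setminus\{w\})$. Without that equality the exact-sequence estimate only gives $\reg(S/J_G)\le b(G)+1$, which is not enough. The treatment of $G\setminus\{w\}$ as a gluing of a chain of complete graphs with a smaller Setup graph or Cohen--Macaulay block graph matches the paper and is fine once the induction hypothesis is invoked for the Setup-type piece.
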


\begin{proof}
Let $w$ be the first cut vertex of $G$, i.e. $w = w_0$ when $D_1=C_4$, otherwise $w = w_1$. 
By \Cref{lembgw<bg}, we have $b(G) > b(G_w)$.  First, we will show that $\reg(S/J_G)\leq b(G)$ by using induction on the number of cut vertices of $G$. Let the number of cut vertices of $G$ be $1$. Then $G=\mathrm{cone}(w, H)$, where $H=H_1\cup H_2$, $H_1$ is a block graph with $J_{H_1}$ Cohen-Macaulay and $H_2$ is the isolated vertex $f_w$. Then it is easy to see that $b(G)=b(G\setminus \{w\})=b(H_1)$. Therefore, by \cite[Theorem 2.1]{mkreg3} and \Cref{remregblock}, we have $\reg(S/J_G)=\reg(S/J_{H_1})=b(H_1)=b(G)$. Now, let us assume $G$ has more than $1$ cut vertex. First, we consider the graph $G_w$. Since $G_w$ has one less cut vertices than $G$ and $G_w$ also satisfies \Cref{setup}, by induction hypothesis and \Cref{lembgw<bg},
\begin{equation}\label{eq1}
 \mathrm{reg}(S/J_{G_w}) \leq b(G_w)< b(G).
 \end{equation}
Let $G_w=\ov{B'}$, where $B'=\cup_{i=1}^{r'}D'_{i}$. Since $w$ is a free vertex in $G_w$ and $D'_{1}$ is a complete graph on at least $4$ vertices containing $w$, it is easy to verify that $H' \in \mathbf{CMB}(G_w \setminus \{w\})$ if and only if $H'=H\setminus \{w\}$ for some $H \in \mathbf{CMB}(G_w)$. Let $b(G_w) = b(H)$ for some $H \in \mathbf{CMB}(G_w)$. Then, the block of $H$, which contains the vertex $w$ (say $B_1$), is a complete graph on at least three vertices. If $H = G_w \setminus W$ for some set of cut vertices $W$ of $G_w$, then $(G_w \setminus \{w\}) \setminus W = H' \in \mathbf{CMB}(G_w \setminus \{w\})$. If $B_1$ is the block of $H$ containing $w$, then $B_1 \setminus \{w\}$ is a block of $H'$ and $B_1 \setminus \{w\}$ can not be empty. Therefore, we have $b(H') = b(H)$, and thus, $b(G_w \setminus \{w\}) \geq b(G_w)$.
Let $b(G_w \setminus \{w\}) = b(H')$ for some $H' \in \mathbf{CMB}(G_w \setminus \{w\})$. Then $H' = H \setminus w$ for some $H \in \mathbf{CMB}(G_w)$. Thus, we get $b(H) = b(H')$ because  $w$ is a free vertex of $H$, and the block containing $w$ in $H$ has at least three vertices. Therefore, $b(G_w) \geq b(G_w \setminus \{w\})$, and hence, $b(G_w) = b(G_w \setminus \{w\})$. Since $G_{w}\setminus\{w\}$ has less number of cut vertices than $G$, by induction hypothesis and \Cref{lembgw<bg}, it follows that 
\begin{equation}\label{eq2}
\mathrm{reg}(S/\langle J_{G_w \setminus \{w\}}, x_w,y_w \rangle) \leq b(G_w \setminus \{w\}) = b(G_w) < b(G).
\end{equation}
Now, we will show that $ \mathrm{reg}(S/\langle J_{G \setminus \{w\}}, x_w,y_w \rangle) \leq b(G)$. Since the number of cut vertices of $G$ is more than $1$, there exists $k$ such that $w$ belongs to $D_1, \ldots, D_k$ and $w \not\in D_{k+1}$ when $k<r$ or $D_k=C_4$ when $k=r$. If $k<r$, then we consider $u$ to be the non-cut vertex belongs to $D_k \cap D_{k+1}$, and if $k=r$, then we consider $u=u_r$. Now, due to the structure of $G$ given in \Cref{setup} (especially, (ii) and (vii) of \Cref{setup}), one can observe that $G \setminus \{w\}$ is a gluing of $H_1$ and $H_2$ at $u$, where $H_1$ is a gluing of some complete graphs among which only one complete graph can have more than two vertices. Also, observe that either $H_2$ is a block graph with $J_{H_2}$ Cohen-Macaulay or $H_2$ is a graph satisfying \Cref{setup}. Suppose $b(H_2)=b(H)$ for some $H\in \mathbf{CMB}(H_2)$ and $H=H_2\setminus W$ for some set of cut vertices of $H_2$. If $W(G)$ is the set of all cut vertices of $G$, then $W(G)\setminus \{w\}$ is the set of all cut vertices of $H_2$. Now, consider $H'=G\setminus (W\cup \{w\})$. Then $H'$ is a gluing of $H_1$ and $H$ at the vertex $u$, which implies $H'$ is a block graph with $J_{H'}$ Cohen-Macaulay by \Cref{thmcmblock}. Therefore, we have $H'\in\mathbf{CMB}(G)$ and $b(G)\geq b(H')= b(H_1)+b(H_2)$. By induction hypothesis and the formula for regularity of gluing, we get 
\begin{equation}\label{eq3}
\mathrm{reg}(S/\langle J_{G \setminus \{w\}}, x_w,y_w \rangle) =b(H_1)+b(H_2)\leq b(G).
\end{equation}
Now, we use the exact sequence \eqref{eqexact} by considering the graph $G$ and the non-free vertex $w$ of $G$ as follows:
$$0 \longrightarrow \frac{S}{J_G} \longrightarrow \frac{S}{\langle J_{G \setminus \{w\}},x_w,y_w\rangle} \oplus \frac{S}{J_{G_w}} \longrightarrow \frac{S}{\langle J_{G_w \setminus \{w\}},x_w,y_w\rangle} \longrightarrow 0.$$
Thus, by regularity lemma, we have 
\begin{align}\label{eq4}
   \mathrm{reg}(S/J_G) \leq \mathrm{max}\{\mathrm{reg}(S/\langle J_{G \setminus \{w\}},x_w,y_w\rangle), \mathrm{reg}(S/J_{G_w}), \mathrm{reg}(S/\langle J_{G_w \setminus \{w\}},x_w,y_w\rangle) + 1\}. 
\end{align}
Using inequalities \eqref{eq1}, \eqref{eq2}, \eqref{eq3} and \eqref{eq4}, we have
\begin{align*}
    \reg(S/J_G)&\leq \max\{b(G), b(G)-1, b(G)-1+1\}\\
    &=b(G).
\end{align*}
    Let $b(G)=b(H)$ for some $H\in\mathbf{CMB}(G)$. Then $\reg(S/J_H)=b(H)$ by \Cref{remregblock}. Since $H$ is an induced subgraph of $G$, by \cite[Corollary 2.2]{mm13}, we have $\reg(S/J_G)\geq \reg(S/J_H)=b(H)=b(G)$. Therefore, we have $\reg(S/J_G)=b(G)$.
\end{proof}

\begin{remark}{\rm
    Let $G$ be a graph and $H$ be any induced block graph of $G$ with $J_H$ Cohen-Macaulay. Such a $H$ always exists as removing a sufficient number of vertices from a non-empty graph, we always have a $K_2$. Therefore, by \cite[Corollary 2.2]{mm13} and \Cref{remregblock}, we have $b(H)\leq\reg(S/J_G)$. Hence, we can define $b(G)$ for any graph $G$ as follows:
    $$b(G):=\max\{b(H)\mid H\text{ is an induced block graph of }G \text{ with }J_{H}\text{ Cohen-Macaulay}\}.$$ 
    Then $b(G)\leq \reg(S/J_G)$. Observe that whenever $G$ is the graph satisfying \Cref{setup}, from our result in \Cref{thmchain}, it follows that the definition of $b(G)$ given before \Cref{remark1} and the definition of $b(G)$ given here coincide.
    
}
\end{remark}

\section{Regularity of $r$-regular $r$-connected blocks with whiskers}\label{secreg-connected}

In this section, we explicitly find the regularity of the Cohen-Macaulay binomial edge ideal of a $r$-regular $r$-connected block with whiskers. In general, we find the regularity formula for the Cohen-Macaulay binomial edge ideals given in \cite{sswhisker}.
\par 

Let $K_m$ and $K_n$ be the complete graphs on $m$ and $n$ vertices, respectively, with $m,n\geq 2$. Suppose that $V(K_m) = \{1,\ldots,m\}$ and $V(K_n) = \{m+1, \ldots, m+n\}$. Let $2\leq r \leq \mathrm{min}\{m,n\}$. Define the graph $K_m \star_r K_n$ as follows:
\begin{enumerate}
    \item[$\bullet$] $V(K_m \star_r K_n) = V(K_m) \cup V(K_n)$;
    \item[$\bullet$] $E(K_m \star_r K_n) = E(K_m) \cup E(K_n) \cup \{\{i,m+i\} \mid 1 \leq i \leq r\}$.
\end{enumerate}

It has been proved in \cite{sswhisker} that $\ov{K_m \star_r K_n}$ (\Cref{fig5}) is Cohen-Macaulay if and only if 
\begin{enumerate}
\item[$\bullet$] $V\ov{(K_m \star_r K_n)} = V(K_m \star_r K_n) \cup \{m+n+1, \ldots, m+n+2(r-1)\}$;
\item[$\bullet$] $E\ov{(K_m \star_r K_n)} = E(K_m \star_r K_n) \cup \{\{i,m+n+i\} \mid 1 \leq i \leq r-1 \} \cup \{\{m+i,m+n+r-1+i\} \mid 1 \leq i \leq r-1\}$.
\end{enumerate}

\begin{figure}[h]
\begin{tikzpicture}
 [scale=1]
 
\filldraw[black] (0.5,0) circle (2pt)node[anchor=east]{$9$};
\filldraw[black] (2,0) circle (2pt)node[anchor=north]{$2$};
\filldraw[black] (2,2) circle (2pt)node[anchor=south]{$1$};
\filldraw[black] (1,1) circle (2pt)node[anchor=east]{$4$};
\filldraw[black] (3,1) circle (2pt)node[anchor=north]{$3$};
\filldraw[black] (0.5,2) circle (2pt)node[anchor=east]{$8$};

\filldraw[black] (5,1) circle (2pt)node[anchor=north]{$7$};
\filldraw[black] (6,2) circle (2pt)node[anchor=south]{$5$};
\filldraw[black] (6,0) circle (2pt)node[anchor=north]{$6$};
\filldraw[black] (7.5,0) circle (2pt)node[anchor=west]{$11$};
\filldraw[black] (7.5,2) circle (2pt)node[anchor=west]{$10$};

\draw[black] (0.5,0) -- (2,0);
\draw[black] (2,0) -- (2,2);
\draw[black] (2,0) -- (1,1);
\draw[black] (2,0) -- (3,1);
\draw[black] (2,2) -- (1,1);
\draw[black] (2,2) -- (3,1);
\draw[black] (1,1) -- (3,1);
\draw[black] (0.5,2) -- (2,2);
\draw[black] (3,1) -- (5,1);
\draw[black] (5,1) -- (6,0);
\draw[black] (5,1) -- (6,2);
\draw[black] (6,0) -- (6,2);
\draw[black] (6,0) -- (2,0);
\draw[black] (6,2) -- (2,2);
\draw[black] (6,0) -- (7.5,0);
\draw[black] (6,2) -- (7.5,2);

\end{tikzpicture}
\caption{The graph $\overline{K_{4}\star_{3} K_{3}}$.}
\label{fig5}
\end{figure}
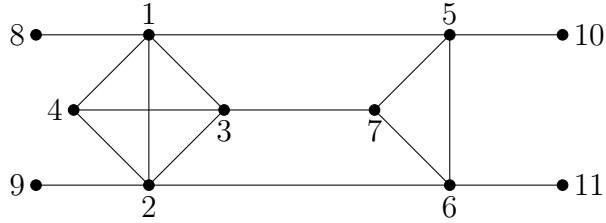

\begin{theorem}\label{thmstar2}
Let $G = \ov{K_m \star_2 K_n}$. Then we have the following:
\begin{align*}
    \reg(S/J_G)=
    \begin{cases}
        4 & \text{if $m>2$ or $n>2$}\\
        3 & \text{otherwise}.
    \end{cases}
\end{align*}
\end{theorem}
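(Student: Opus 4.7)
The plan is to split the proof into two parallel arguments: the case $m=n=2$ will be handled via \Cref{thmchain}, while the case $m>2$ or $n>2$ will use the exact sequence \eqref{eqexact} twice, together with a cone observation.

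For the first case, $G=\ov{K_2\star_2 K_2}$ is a $C_4$ on $\{1,2,m+1,m+2\}$ with whiskers attached at the opposite vertices $1$ and $m+1$. Setting $w_0=1$, $w_1=m+1$, $u_0=2$, $u_1=m+2$, one verifies that $G$ satisfies \Cref{setup} with $B=D_1=C_4$ (only condition (v) is relevant). Thus \Cref{thmchain} yields $\reg(S/J_G)=b(G)$. The only cut vertices of $G$ are $1$ and $m+1$: deleting one produces a block graph consisting of a $P_4$ together with an isolated whisker-leaf (so three blocks), and deleting both leaves only a single $K_2$. Hence $b(G)=3$.

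For the second case, by the symmetry $m\leftrightarrow n$ I may assume $m\ge 3$. For the lower bound I exhibit the induced path $m+n+2,\,m+1,\,m+2,\,2,\,3$ of length $4$ in $G$ and invoke \cite[Corollary 2.2]{mm13} to conclude $\reg(S/J_G)\ge 4$. For the upper bound I apply \eqref{eqexact} with $v=m+1$ and bound the three resulting summands separately. The first summand, $G\setminus\{m+1\}$, is an iterated gluing at free vertices of $K_m$, the whisker $\{1,m+n+1\}$, the connecting edge $\{2,m+2\}$, and $K_{n-1}$, together with the isolated leaf $m+n+2$; \Cref{remregblock} bounds its regularity by $1+1+1+\reg(K_{n-1})\le 4$. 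The second summand $G_{m+1}$ is the main technical point: once I adjoin the clique-edges on $N_G(m+1)=\{1,m+2,\ldots,m+n,m+n+2\}$, the vertex $1$ becomes adjacent to \emph{every} other vertex. Consequently $G_{m+1}\setminus\{1\}$ is a block graph (an iterated gluing of $K_{m-1}$, $K_2$, $K_{n+1}$, plus an isolated leaf) of regularity $3$, and $(G_{m+1})_1=K_{m+n+2}$; a second application of \eqref{eqexact} to $G_{m+1}$ at $v=1$ then gives $\reg(S/J_{G_{m+1}})\le\max\{3,1,1+1\}=3$. The same cone-plus-exact-sequence treatment handles the third summand $G_{m+1}\setminus\{m+1\}$ with bound $3$. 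Assembling these via the regularity lemma for \eqref{eqexact} at $v=m+1$ yields $\reg(S/J_G)\le\max\{4,3,3+1\}=4$, matching the lower bound.

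The hard step is the bound on $\reg(S/J_{G_{m+1}})$: the two large cliques $K_m$ and $K_{n+2}$ inside $G_{m+1}$ share only the vertex $1$ and are additionally linked by the edge $\{2,m+2\}$, so no direct gluing decomposition of $G_{m+1}$ is available. The crucial observation that $1$ has become a universal vertex after the clique-addition is what enables the second use of Ohtani's sequence and produces a regularity bound independent of $m$ and $n$.
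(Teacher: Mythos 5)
Your proof is correct and follows the same overall strategy as the paper: \Cref{thmchain} for the case $m=n=2$, and for $m>2$ or $n>2$ the exact sequence \eqref{eqexact} at a cut vertex combined with the regularity lemma for the upper bound, plus an induced Cohen--Macaulay block subgraph for the lower bound (the paper uses all of $G\setminus\{1\}$, a four-block graph, where you use an induced $P_5$; both give $4$ by \cite[Corollary 2.2]{mm13} and \Cref{remregblock}). The one genuine, though minor, departure is your treatment of the middle term: the paper observes that $G_v$ is a cone over a disconnected graph and quotes \cite[Theorem 2.1]{mkreg3}, whereas you exploit the fact that $1$ becomes a universal vertex of $G_{m+1}$ and run \eqref{eqexact} a second time at that vertex, where $(G_{m+1})_1$ is complete; this is a correct, self-contained substitute for the cone theorem, and the analogous handling of $G_{m+1}\setminus\{m+1\}$ also goes through since $1$ remains universal there. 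One terminological slip: in the case $m=n=2$ the whiskered vertices $1$ and $m+1$ are \emph{adjacent} in the $C_4$ (via the edge $\{1,m+1\}$ of $K_2\star_2 K_2$), not opposite; adjacency is exactly what condition (v) of \Cref{setup} requires and is what your assignment $w_0=1$, $w_1=m+1$, $u_0=2$, $u_1=m+2$ actually verifies, so the argument itself is unaffected.
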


\begin{proof}
    First, let us assume $m>2$. Note that $1$ and $m+1$ are the only cut vertices of $G$. Since $m>2$, it is easy to see that $G \setminus \{1\}$ is a block graph consisting of $4$ (non-trivial) blocks with an isolated vertex $m+n+1$ such that $J_{G\setminus \{1\}}$ is Cohen-Macaulay. Therefore, $\reg(S/J_{G\setminus\{1\}})=4$ by \Cref{remregblock}. Now, we consider the graph $G_1$. Then $G_1=\mathrm{cone}(m+1,H)$, where $H=H_1\sqcup H_2$, $H_1$ is a block graph with $J_{H_1}$ Cohen-Macaulay and $H_2$ is the isolated vertex $m+n+2$. Note that $H_1$ consists of $2$ blocks if $n=2$, otherwise $H_1$ has $3$ blocks. Thus, using \cite[Theorem 2.1]{mkreg3} and \Cref{remregblock}, we have $\reg(S/J_{G_1})\leq 3$. Since $1$ is a free vertex in $G_1$ and the clique of $G_1$ containing $1$ has at least $5$ vertices, we can use similar arguments as $G_1$ for $G_{1}\setminus\{1\}$ to get $\reg(S/J_{G_{1}\setminus \{1\}})\leq 3$. The short exact sequence \eqref{eqexact} corresponding to the graph $G$ and the non-free vertex $1$ of $G$ become
    $$0 \longrightarrow \frac{S}{J_G} \longrightarrow \frac{S}{\langle J_{G \setminus \{1\}},x_1,y_1\rangle} \oplus \frac{S}{J_{G_1}} \longrightarrow \frac{S}{\langle J_{G_1 \setminus \{1\}},x_1,y_1\rangle} \longrightarrow 0.$$
Thus, by regularity lemma, we have 
\begin{align*}
    \mathrm{reg}(S/J_G) &\leq \max\{\mathrm{reg}(S/\langle J_{G \setminus \{1\}},x_1,y_1\rangle), \mathrm{reg}(S/J_{G_1}), \mathrm{reg}(S/\langle J_{G_1 \setminus \{1\}},x_1,y_1\rangle) + 1\}\\
    &\leq \max\{4,3,3+1\}=4.
\end{align*}
  Since $G\setminus \{1\}$ is an induced subgrph of $G$, by \cite[Corollary 2.2]{mm13}, it follows that $\reg(S/J_G)\geq\reg(S/J_{G\setminus \{1\}})=4$. Hence, $\reg(S/J_G)=4$. The case of $n>2$ is similar by considering the cut vertex $m+1$ instead of $1$.\par 

  Now, we suppose $m=2$ and $n=2$. Then $G$ satisfies \Cref{setup} and it is easy to verify $b(G)=b(G\setminus\{1\})=b(G\setminus\{3\})=3$. Therefore, by \Cref{thmchain}, we get $\reg(S/J_{G})=3$.
\end{proof}

\begin{figure}[h]
\begin{tikzpicture}
 [scale=1]
 
\filldraw[black] (0.5,0) circle (2pt)node[anchor=east]{$9$};
\filldraw[black] (2,2) circle (2pt)node[anchor=south]{$1$};
\filldraw[black] (1,1) circle (2pt)node[anchor=east]{$4$};
\filldraw[black] (3,1) circle (2pt)node[anchor=north]{$3$};
\filldraw[black] (0.5,2) circle (2pt)node[anchor=east]{$8$};

\filldraw[black] (5,1) circle (2pt)node[anchor=north]{$7$};
\filldraw[black] (6,2) circle (2pt)node[anchor=south]{$5$};
\filldraw[black] (6,0) circle (2pt)node[anchor=north]{$6$};
\filldraw[black] (7.5,0) circle (2pt)node[anchor=west]{$11$};
\filldraw[black] (7.5,2) circle (2pt)node[anchor=west]{$10$};

\draw[black] (2,2) -- (1,1);
\draw[black] (2,2) -- (3,1);
\draw[black] (1,1) -- (3,1);
\draw[black] (0.5,2) -- (2,2);
\draw[black] (3,1) -- (5,1);
\draw[black] (5,1) -- (6,0);
\draw[black] (5,1) -- (6,2);
\draw[black] (6,0) -- (6,2);
\draw[black] (6,2) -- (2,2);
\draw[black] (6,0) -- (7.5,0);
\draw[black] (6,2) -- (7.5,2);

\end{tikzpicture}
\caption{The graph $G\setminus\{2\}$, where $G=\overline{K_{4}\star_{3} K_{3}}$.}
\label{fig6}
\end{figure}
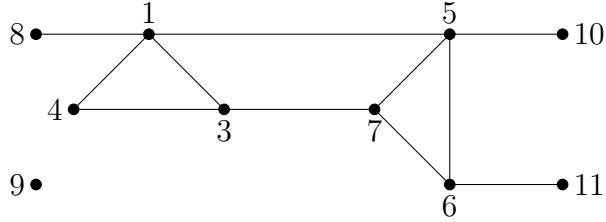

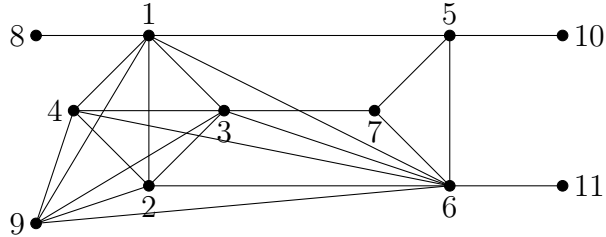
\begin{figure}[h]
\begin{tikzpicture}
 [scale=1]
 
\filldraw[black] (0.5,-0.5) circle (2pt)node[anchor=east]{$9$};
\filldraw[black] (2,0) circle (2pt)node[anchor=north]{$2$};
\filldraw[black] (2,2) circle (2pt)node[anchor=south]{$1$};
\filldraw[black] (1,1) circle (2pt)node[anchor=east]{$4$};
\filldraw[black] (3,1) circle (2pt)node[anchor=north]{$3$};
\filldraw[black] (0.5,2) circle (2pt)node[anchor=east]{$8$};

\filldraw[black] (5,1) circle (2pt)node[anchor=north]{$7$};
\filldraw[black] (6,2) circle (2pt)node[anchor=south]{$5$};
\filldraw[black] (6,0) circle (2pt)node[anchor=north]{$6$};
\filldraw[black] (7.5,0) circle (2pt)node[anchor=west]{$11$};
\filldraw[black] (7.5,2) circle (2pt)node[anchor=west]{$10$};

\draw[black] (0.5,-0.5) -- (2,0);
\draw[black] (2,0) -- (2,2);
\draw[black] (2,0) -- (1,1);
\draw[black] (2,0) -- (3,1);
\draw[black] (2,2) -- (1,1);
\draw[black] (2,2) -- (3,1);
\draw[black] (1,1) -- (3,1);
\draw[black] (0.5,2) -- (2,2);
\draw[black] (3,1) -- (5,1);
\draw[black] (5,1) -- (6,0);
\draw[black] (5,1) -- (6,2);
\draw[black] (6,0) -- (6,2);
\draw[black] (6,0) -- (2,0);
\draw[black] (6,2) -- (2,2);
\draw[black] (6,0) -- (7.5,0);
\draw[black] (6,2) -- (7.5,2);

\draw[black] (0.5,-0.5) -- (6,0);
\draw[black] (0.5,-0.5) -- (3,1);
\draw[black] (0.5,-0.5) -- (2,2);
\draw[black] (0.5,-0.5) -- (1,1);
\draw[black] (6,0) -- (3,1);
\draw[black] (6,0) -- (1,1);
\draw[black] (6,0) -- (2,2);

\end{tikzpicture}
\caption{The graph $G_v$, where $G=\overline{K_{4}\star_{3} K_{3}}$ and $v=2$.}
\label{fig7}
\end{figure}

\begin{lemma}\label{lemstarupper}
Let $G = \ov{K_m \star_r K_n}$. Then $\mathrm{reg}(S/J_G)\leq 2r-1$ for $r\geq 3$.
\end{lemma}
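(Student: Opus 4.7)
My plan is to establish the upper bound by applying the Ohtani short exact sequence \eqref{eqexact} at the cut vertex $v=2$ of $G$, as suggested by Figures \ref{fig6} and \ref{fig7}. The sequence
\[
0 \longrightarrow \frac{S}{J_G} \longrightarrow \frac{S}{\langle J_{G \setminus \{2\}}, x_2, y_2 \rangle} \oplus \frac{S}{J_{G_2}} \longrightarrow \frac{S}{\langle J_{G_2 \setminus \{2\}}, x_2, y_2 \rangle} \longrightarrow 0,
\]
together with the regularity lemma, reduces the proof to bounding the three quotients by $2r-1$, $2r-1$, and $2r-2$ respectively. I would induct on $r\ge 3$ with base case $r=3$.

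For $G\setminus\{2\}$: removing $2$ isolates its whisker leaf $m+n+2$, leaving a main component $H'$ whose biconnected core consists of $K_{m-1}$ and $K_n$ linked by the $r-1$ surviving bridges $\{i,m+i\}$ for $i\ne 2$. When $r=3$, one checks that $H'$ satisfies \Cref{setup} (the core being a short chain of cycles with $D_1\in\{K_{m-1},C_4\}$ and later $D_i\in\{C_3,C_4\}$, together with the surviving whiskers), so \Cref{thmchain} gives $\reg(S/J_{H'}) = b(H') \le 2r-1 = 5$ directly. For $r\ge 4$, after relabeling within $K_n$ one checks that $H'$ is isomorphic to $\ov{K_{m-1}\star_{r-1} K_n}$ augmented by a single extra whisker on a non-bridged vertex of $K_n$; combining the induction hypothesis with a bound on the regularity increase from one whisker-attachment yields the desired estimate $\le 2r-1$.

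For $G_2$ and $G_2\setminus\{2\}$: passage to $G_v$ adds all edges within $N_G(2)=\{1,3,\ldots,m,m+2,m+n+2\}$, producing a $K_{m+2}$ on $\{1,2,\ldots,m,m+2,m+n+2\}$ that shares the vertex $m+2$ with the original $K_n$. The leaf $m+n+2$ becomes free in $G_2$, but the surviving bridges $\{i,m+i\}$ for $i\ne 2$ prevent a clean gluing decomposition at $2$. I would iterate \eqref{eqexact} at a second vertex (for instance $m+2$, which sits at the ``seam'' between the two cliques, or another cut vertex) to reduce these quotients to simpler graphs whose regularity can be bounded either by the chain-of-cycles formula \Cref{thmchain} or by direct gluing along a newly-exposed free vertex. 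Careful tracking of regularity through these nested reductions should deliver $\reg(S/J_{G_2})\le 2r-1$ and $\reg(S/J_{G_2\setminus\{2\}})\le 2r-2$, concluding the proof via the regularity lemma.

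The main obstacle will be the analysis of $G_2$ and especially $G_2\setminus\{2\}$, since these acquire new edges through $N_G(2)$ and do not fall into any class studied elsewhere in the paper: they are neither block graphs nor of the form $\ov{K_{m'}\star_{r'}K_{n'}}$. Establishing the sharper bound $2r-2$ for $G_2\setminus\{2\}$ is the most delicate point, as it must be tight enough to absorb the $+1$ contributed by the regularity lemma. Careful combinatorial case-work—likely a further iteration of \eqref{eqexact} at a well-chosen secondary vertex, or a separate inductive argument on the number of remaining cut vertices—will be required to close these estimates.
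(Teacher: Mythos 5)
Your skeleton matches the paper's: apply \eqref{eqexact} at a bridge cut vertex of $K_m$, reduce to the three quotients, and note that the third one needs the sharper bound $2r-2$ to absorb the $+1$ from the regularity lemma. However, two points prevent this from being a proof. First, your treatment of $G\setminus\{2\}$ in the base case $r=3$ is wrong as stated: the main component is a gluing of $\ov{K_{m-1}\star_{2}K_n}$ with a $K_2$ at the now-free vertex $m+2$, and $\ov{K_{m-1}\star_{2}K_n}$ does \emph{not} satisfy \Cref{setup} once $m-1\geq 3$ and $n\geq 4$, because the biconnected core then contains two complete blocks on more than three vertices, while \Cref{setup} permits a non-cycle block only in the position $D_1$ (all $D_i$ with $i\geq 2$ must be $C_3$ or $C_4$). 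So \Cref{thmchain} is not applicable here; the correct reference is \Cref{thmstar2} (already proved for $r=2$), which together with the gluing formula of \Cref{remregblock} gives $\reg \leq 4+1=5$, and for $r>3$ the induction hypothesis gives $\leq 2(r-1)-1+1=2r-2$.

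Second, and more seriously, the bounds for $G_2$ and $G_2\setminus\{2\}$ --- which you correctly identify as the crux --- are only asserted, not proved, and your remark that these graphs ``do not fall into any class studied elsewhere in the paper'' shows the closing observation is missing. The point is that a second application of \eqref{eqexact} to $H=G_v$ at the seam vertex $u=m+r-1$ lands you back inside the classes already under control: $H\setminus\{u\}\cong \ov{K_{m+1}\star_{r-1}K_{n-1}}$ plus an isolated vertex, so the induction hypothesis (or \Cref{thmstar2} when $r=3$) bounds its regularity by $2r-3$; and $H_u=(G_v)_u$ is a complete graph on $m+n+2$ vertices with $2(r-2)$ whiskers, hence a Cohen--Macaulay block graph with $2r-3$ blocks and regularity exactly $2r-3$ by \Cref{remregblock} (likewise for $H_u\setminus\{u\}$, since $u$ is free in $H_u$). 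The regularity lemma then yields $\reg(S/J_{G_v})\leq\max\{2r-3,\,2r-3,\,2r-2\}=2r-2$, and the same argument applies verbatim to $G_v\setminus\{v\}$ because $v$ and its whisker leaf are free vertices in a common maximal clique of $G_v$. Without these two identifications the ``careful tracking'' you defer to does not close, so as written the proposal has a genuine gap at its central step.
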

\begin{proof}
We first prove the result for $r=3$, and then we proceed by induction on $r$. Note that $1,2,m+1,m+2$ are the only cut vertices of $G=\ov{K_m\star_3 K_n}$. We will consider the cut vertex $2$ and investigate the graphs $G\setminus\{2\}$, $G_2$ and $G_{2}\setminus \{2\}$. It is easy to see from the structure of $G$ that $G\setminus \{2\}$ is a gluing of $H_1$ and $H_2$ at the vertex $m+2$, where $H_1\cong \ov{K_{m-1}\star_{2} K_{n}}$ and $H_{2}\cong K_2$ with $V(H_2)=\{m+2,m+n+4\}$ and also, $G\setminus\{2\}$ has an isolated vertex $m+n+2$. Therefore, by \Cref{thmstar2} and \Cref{remregblock}, we have
\begin{align}\label{eq4.1}
\mathrm{reg}(S/\langle J_{G \setminus \{2\}}, x_{2},y_{2} \rangle) = \mathrm{reg}(S_1/J_{H_1}) + \mathrm{reg}(S_2/J_{H_2}) \leq 4+1 = 5,
\end{align}
where $S_1$ and $S_2$ are the corresponding polynomial rings of $J_{H_1}$ and $J_{H_2}$ respectively. Now, let us focus on the graph $G_2$. We choose the cut vertex $m+2$ of $G_2$. Then $G_2 \setminus \{m+2\} \cong \ov{K_{m+1} \star_{2} K_{n-1}} \sqcup \{m+n+4\}$, and thus, $\mathrm{reg}(S/\langle J_{G_2 \setminus \{m+2\}}, x_{m+2},y_{m+2} \rangle) \leq 4$ by \Cref{thmstar2}. Also, note that $(G_2)_{m+2}\cong \ov{K_{m+n+2}}$, where $\ov{K_{m+n+2}}$ is a complete graph with $2$ whiskers attached to $2$ distinct vertices. Therefore, by \Cref{remregblock}, we get $\reg(S/J_{(G_2)_{m+2}})=3$. Again, $m+2$ is a free vertex in $(G_2)_{m+2}$, and the block containing $m+2$ has more than two vertices. Hence, $(G_2)_{m+2} \setminus \{m+2\}$ is a Cohen-Macaulay block graph with $3$ blocks. Therefore, by \Cref{remregblock},
$$\mathrm{reg}(S/\langle J_{(G_2)_{m+2} \setminus \{m+2\}}, x_{m+2},y_{m+2} \rangle)=3.$$
By considering the exact sequence \eqref{eqexact} with the graph $G_2$ and the non-free vertex $v=m+2$ of $G_2$ and using the regularity lemma, we have
\begin{align*}
    \mathrm{reg}(S/J_{G_2}) &\leq \mathrm{max}\{\mathrm{reg}(S/\langle J_{G_2 \setminus \{m+2\}},x_{m+2},y_{m+2}\rangle), \mathrm{reg}(S/J_{(G_2)_{m+2}}), \\
    &\hspace{1.5cm} \mathrm{reg}(S/\langle J_{(G_2)_{m+2} \setminus {m+2}},x_{m+2},y_{m+2}\rangle) + 1\}.
\end{align*}
In particular, 
\begin{align}\label{eq4.2}
    \reg(S/J_{G_2})\leq \mathrm{max}\{4,3,3+1\}=4.
\end{align}
 \noindent Now, we consider the graph $G_2 \setminus \{2\}$. Note that $2$ and $m+n+2$ are free vertices in $G_2$, and the maximal clique containing $2$ in $G_2$ also contains $m+n+2$. Thus, the structure of $G_2\setminus \{2\}$ and $G_2$ are almost same. Hence, performing the same operations and arguments to $G_2\setminus \{2\}$ as we have done to get the upper bound of $\reg(S/J_{G_2})$, one can easily derive 
\begin{align}\label{eq4.3}
    \mathrm{reg}(S/ \langle J_{G_2 \setminus \{2\}}, x_2, y_2 \rangle) \leq 4.
\end{align}
Now, consider the exact sequence \eqref{eqexact} by taking the graph $G$ and the vertex $v=2$ as follows:
$$0 \longrightarrow \frac{S}{J_G} \longrightarrow \frac{S}{\langle J_{G \setminus \{2\}},x_2,y_2\rangle} \oplus \frac{S}{J_{G_2}} \longrightarrow \frac{S}{\langle J_{G_2 \setminus \{2\}},x_2,y_2\rangle} \longrightarrow 0.$$
Thus, applying the regularity lemma and using inequalities \eqref{eq4.1}, \eqref{eq4.2} and \eqref{eq4.3}, we get 
\begin{align*}
    \mathrm{reg}(S/J_G) &\leq \mathrm{max}\{\mathrm{reg}(S/\langle J_{G \setminus \{2\}},x_2,y_2\rangle), \mathrm{reg}(S/J_{G_2}), \mathrm{reg}(S/\langle J_{G_2 \setminus \{2\}},x_2,y_2\rangle) + 1\}\\
    &= \mathrm{max}\{5,4,4+1\}=5=2\times 3-1.
\end{align*}
Next, we consider $r>3$ and assume the required inequality holds for the graph $\ov{K_{m}\star_{r-1} K_n}$. Note that $r-1$ is a cut vertex of $G=\ov{K_{m}\star_{r} K_n}$. For simplicity of notation, let us denote $r-1$ by $v$. Then, one can see that $G \setminus \{v\}$ (see \Cref{fig6}) is a gluing of $H_1$ and $H_2$ at the vertex $m+r-1$, where $H_1 \cong \ov{K_{m-1} \star_{r-1} K_n}$ and $H_2 \cong K_2$ with $V(H_2) = \{m+r-1, m+n+2(r-1)\}$. Also, $G \setminus \{v\}$ has an isolated vertex $m+n+r-1$. Therefore, by induction hypothesis and \Cref{remregblock}, we have
\begin{align}\label{eq4.4}
    \mathrm{reg}(S/\langle J_{G \setminus \{v\}}, x_{v},y_{v} \rangle) = \mathrm{reg}(S_1/J_{H_1}) + \mathrm{reg}(S_2/J_{H_2}) \leq 2(r-1)-1+1 = 2r-2,
\end{align}
where $S_1$ and $S_2$ are the corresponding polynomial rings of $J_{H_1}$ and $J_{H_2}$ respectively.
Now, we consider the graph $G_v$ (see \Cref{fig7}). We claim that $\mathrm{reg}(S/J_{G_v}) \leq 2r-2$. Let $H = G_v$, and we choose the cut vertex $m+r-1$ of $H$. For simplicity, let us denote $m+r-1$ by $u$. Observe that $H \setminus \{u\} \cong \ov{K_{m+1} \star_{r-1} K_{n-1}} \sqcup \{m+n+2(r-1)\}$. Thus, by induction hypothesis, we get 
$$\mathrm{reg}(S/\langle J_{H \setminus \{u\}}, x_u,y_u \rangle)\leq 2(r-1)-1 = 2r-3.$$
Now, consider the graph $H_u$. From the structure of $G$, it is easy to verify that $H_u \cong (G_v)_u \cong \ov{K_{m+n+2}},$ where $\ov{K_{m+n+2}}$ is a complete graph with $2(r-2)$ whiskers attached with $2(r-2)$ distinct vertices. Therefore, $H_u$ is a block graph with $2(r-2)+1$ blocks and $J_{H_u}$ is Cohen-Macaulay. Thus, it follows from \Cref{remregblock} that $\mathrm{reg}(S/J_{H_u}) = 2(r-2)+1 = 2r-3$. Again, $u$ is a free vertex in $H_u$, and the maximal clique containing $u$ in $H_u$ has more than $m+n+2$ vertices. Hence, $H_u \setminus \{u\}$ is again a block graph with $2(r-2) + 1$ blocks and $J_{H_u\setminus \{u\}}$ is Cohen-Macaulay. Similarly, we have $\mathrm{reg}(S/\langle J_{H_u \setminus \{u\}}, x_u,y_u \rangle)= 2r-3$. Now, considering the exact sequence \eqref{eqexact} with the graph $H$ and the vertex $u$, the regularity lemma gives the following:
$$\mathrm{reg}(S/J_H) \leq \mathrm{max}\{\mathrm{reg}(S/\langle J_{H \setminus u},x_u,y_u\rangle), \mathrm{reg}(S/J_{H_u}), \mathrm{reg}(S/\langle J_{H_u \setminus u},x_u,y_u\rangle) + 1\}.$$
Thus, using previous inequalities, we get
\begin{align}\label{eq4.5}
    \mathrm{reg}(S/J_{G_v}) \leq \mathrm{max}\{2r-3,2r-3,2r-2\} = 2r-2.
\end{align}
This proves the claim. Since $v$ and $m+n+r-1$ are free vertices in $G_v$ belonging to the same maximal clique of $G_v$, the structure of $G_v$ and $G_v \setminus \{v\}$ are almost similar. Thus, performing the same operations and arguments to $G_v\setminus \{v\}$ as we have given in the proof of the previous claim for $G_v$, we can easily obtain 
\begin{align}\label{eq4.6}
    \mathrm{reg}(S/ \langle J_{G_v \setminus \{v\}}, x_v, y_v \rangle) \leq 2r-2.
\end{align}
Now, let us consider the following exact sequence:
$$0 \longrightarrow \frac{S}{J_G} \longrightarrow \frac{S}{\langle J_{G \setminus \{v\}},x_v,y_v\rangle} \oplus \frac{S}{J_{G_v}} \longrightarrow \frac{S}{\langle J_{G_v \setminus \{v\}},x_v,y_v\rangle} \longrightarrow 0.$$
Therefore, by inequalities \eqref{eq4.4}, \eqref{eq4.5}, \eqref{eq4.6} and the regularity lemma, we have 
\begin{align*}
    \mathrm{reg}(S/J_G) &\leq \mathrm{max}\{\mathrm{reg}(S/\langle J_{G \setminus \{v\}},x_v,y_v\rangle), \mathrm{reg}(S/J_{G_v}), \mathrm{reg}(S/\langle J_{G_v \setminus \{v\}},x_v,y_v\rangle) + 1\}\\
    &\leq \mathrm{max}\{2r-2,2r-2,2r-1\}=2r-1.
\end{align*}
This completes the proof.
\end{proof}

To establish the lower bound of $\reg(S/J_{\ov{K_m\star_{r} K_n}})$, we need some theory of square-free monomial ideals and initial ideals. Let us first recall them first.
\par 

Let $R=\mathbb{K}[x_{1},\ldots, x_{n}]$ be a polynomial ring over a field $\mathbb{K}$. An ideal $I\subset R$ is said to be a \textit{monomial ideal} if $I$ is generated by a set of monomials. For a monomial ideal $I$, the minimal monomial generating set is unique, which is denoted by $G(I)$. A monomial ideal $I$ is said to be \textit{square-free} if $G(I)$ consists of only square-free monomials. Let $J\subset R$ be a graded ideal and $<$ be a monomial term order on $R$. For a polynomial $f\in R$, we write $\ini_{<}(f)$ to denote the term with the largest monomial in $f$ with respect to $<$. The \textit{initial ideal} of $J$ with respect to $<$, denoted by $\ini_{<}(J)$, is the monomial ideal of $R$ generated by $\{\ini_{<}(f)\mid f\in J\}$.

\begin{definition}{\rm
A \textit{simple hypergraph} $\mathcal{H}$ is a pair $(V(\mathcal{H}),E(\mathcal{H}))$, where $V(\mathcal{H})$ is a set of finite elements, known as the \textit{vertex set} of $\mathcal{H}$ and $E(\mathcal{H})$ is a collection of subsets of $V(\mathcal{H})$ such that no two elements of $E(\mathcal{H})$ contain each other, called the \textit{edge set} of $\mathcal{H}$. Elements of $V(\mathcal{H})$ are called vertices of $\mathcal{H}$ and elements of $E(\mathcal{H})$ are called edges of $\mathcal{H}$.
}
\end{definition}

Let $\mathcal{H}$ be a simple hypergraph on the vertex set $V(\mathcal{H})=\{x_{1},\ldots,x_{n}\}$. For $A\subset V(\mathcal{C})$, we consider $X_{A}:=\prod_{x_{i}\in A} x_{i}$ as a square-free monomial in the polynomial ring $R$. The \textit{edge ideal} of the hypergraph $\mathcal{H}$, denoted by $I(\mathcal{H})$, is an ideal of $R$ defined by 
$$I(\mathcal{H})=\big<X_{e}\mid e\in E(\mathcal{H})\big>.$$
In this sense, the family of square-free monomial ideals corresponds one-to-one with the family of simple hypergraphs. 

\begin{definition}{\rm
An \textit{induced matching} in a simple hypergraph $\mathcal{H}$ is a set of pairwise disjoint edges $e_{1},\ldots,e_{r}$ such that the only edges of $\mathcal{H}$ contained in $\bigcup_{i=1}^{r} e_{i}$ are $e_{1},\ldots,e_{r}$.
}
\end{definition}

\begin{proposition}[{\cite[Corollary 3.9]{mv12}}]\label{propim}
Let $\mathcal{H}$ be a simple graph and $M=\{e_{1},\ldots, e_{r}\}$ be an induced matching in $\mathcal{H}$. Then $\sum_{i=1}^{r}(\vert e_{i}\vert -1)=(\sum_{i=1}^{r}\vert e_{i}\vert)-r\leq \mathrm{reg}(R/I(\mathcal{H}))$.
\end{proposition}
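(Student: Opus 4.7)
The plan is to prove Proposition \ref{propim} by restricting to the induced subhypergraph spanned by the vertices of the matching and then computing the regularity of its edge ideal directly via a tensor-product decomposition. First I would set $V' := \bigcup_{i=1}^{r} e_i$ and let $\mathcal{H}' := \mathcal{H}[V']$ be the induced subhypergraph of $\mathcal{H}$ on $V'$. By the defining property of an induced matching, the only edges of $\mathcal{H}$ contained in $V'$ are $e_1, \ldots, e_r$, so $E(\mathcal{H}') = \{e_1, \ldots, e_r\}$. Since $\{e_1, \ldots, e_r\}$ is in particular a matching, the monomial generators $X_{e_1}, \ldots, X_{e_r}$ of $I(\mathcal{H}')$ involve pairwise disjoint sets of variables.

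Next, writing $R' := \mathbb{K}[x_v : v \in V']$ and $R_i := \mathbb{K}[x_v : v \in e_i]$, the disjointness of supports yields an isomorphism of graded $\mathbb{K}$-algebras
$$R'/I(\mathcal{H}') \;\cong\; \bigotimes_{i=1}^{r} R_i/(X_{e_i}).$$
Castelnuovo--Mumford regularity is additive under such tensor products over a field, and each factor $R_i/(X_{e_i})$ has regularity $|e_i|-1$, since its minimal free resolution is a single Koszul-type step with one generator in degree $|e_i|$. Combining these facts gives
$$\reg(R'/I(\mathcal{H}')) \;=\; \sum_{i=1}^{r}(|e_i|-1).$$

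To finish, I would invoke the standard monotonicity principle that passing to an induced subhypergraph cannot increase the regularity of the quotient by the edge ideal. Concretely, there is a natural isomorphism $R/(I(\mathcal{H}) + (x_v : v \notin V')) \cong R'/I(\mathcal{H}')$, and Hochster's formula expresses the graded Betti numbers of $R/I(\mathcal{H})$ as $\dim_{\mathbb{K}}$'s of reduced simplicial homologies of induced subcomplexes of the independence complex of $\mathcal{H}$; restricting the vertex set to $V'$ precisely recovers the corresponding Betti numbers of $R'/I(\mathcal{H}')$, so $\beta_{i,j}(R'/I(\mathcal{H}')) \leq \beta_{i,j}(R/I(\mathcal{H}))$ for all $i,j$, giving $\reg(R'/I(\mathcal{H}')) \leq \reg(R/I(\mathcal{H}))$. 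Chaining this with the computation above yields the proposition. The main obstacle is the last step: though intuitively obvious, a rigorous justification that restricting to an induced subhypergraph cannot increase regularity is not entirely formal and needs either Hochster's formula or a careful iterated short-exact-sequence argument killing one outside variable at a time; everything else reduces to an elementary Koszul computation.
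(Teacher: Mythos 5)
The paper offers no proof of this proposition: it is quoted as a preliminary directly from \cite[Corollary 3.9]{mv12}, so there is nothing internal to compare against. Your argument is correct and is essentially the standard proof underlying that reference. You restrict to the induced subhypergraph $\mathcal{H}'$ on $V'=\bigcup_{i=1}^{r}e_i$, where the induced-matching hypothesis guarantees $E(\mathcal{H}')=\{e_1,\dots,e_r\}$; since the generators $X_{e_1},\dots,X_{e_r}$ have pairwise disjoint supports, $R'/I(\mathcal{H}')$ is a complete intersection and its regularity is $\sum_{i=1}^{r}(|e_i|-1)$, whether one computes this via your tensor-product decomposition or via the Koszul resolution. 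The transfer back to $\mathcal{H}$ is the restriction lemma $\beta_{i,j}(R'/I(\mathcal{H}'))\leq\beta_{i,j}(R/I(\mathcal{H}))$, and the Hochster-formula justification you sketch is complete: the independence complex of the induced subhypergraph $\mathcal{H}'$ is exactly the restriction to $V'$ of the independence complex of $\mathcal{H}$, so the sum over subsets $W\subseteq V'$ in Hochster's formula for $\mathcal{H}'$ is a subsum of the corresponding sum for $\mathcal{H}$. The step you flag as a potential obstacle is therefore fully rigorous as written, and no gap remains. One remark: the statement says ``simple graph'' where it means ``simple hypergraph'' (the edges $e_i$ may have arbitrary cardinality); your reading is the intended one, and it is the generality actually needed when the proposition is applied in \Cref{lemstarlower}.
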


\begin{definition}[{\cite{hhhrkara}}]{\rm
Let $G$ be a graph with $V(G)=[n]$. A path $\pi: i=i_{0},i_{1},\ldots,i_{r}=j$ from $i$ to $j$ with $i<j$ in $G$ is said to be an \textit{admissible path} in $G$ if the following hold:
\begin{enumerate}
\item[(i)] $i_{k}\neq i_{l}$ for $k\neq l$;

\item[(ii)] For each $k\in\{1,\ldots,r-1\}$, either $i_{k}<i$ or $i_{k}>j$;

\item[(iii)] The induced subgraph of $G$ on the vertex set $\{i_{0},\ldots,i_{r}\}$ has no induced cycle.
\end{enumerate}
}
\end{definition}

\begin{remark}\label{order}{\rm
Corresponding to an admissible path $\pi: i=i_{0},i_{1},\ldots,i_{r}=j$ from $i$ to $j$ with $i<j$ in $G$, we associate the monomial
$$ u_{\pi}=\bigg(\prod_{i_{k}>j} x_{i_{k}}\bigg)\bigg(\prod_{i_{l}<i} y_{i_{l}}\bigg).$$
Let $\prec$ be the lexicographic order on $S=\mathbb{K}[x_1,\ldots,x_n,y_1,\ldots,y_n]$ induced by $x_1\succ x_2\succ \cdots \succ x_n \succ y_1\succ y_2\succ \cdots\succ y_n$. Then $\mathcal{G}=\{u_{\pi}f_{ij}\mid \pi\,\, \text{is an admissible path from}\,\, i\,\, \text{to}\,\, j\,\, \text{with}\,\, i<j\}$ is a reduced Gr\"{o}bner basis of $J_{G}$ with respect to $\prec$ by \cite[Theorem 2.1]{hhhrkara}. Therefore, 
$$G(\mathrm{in}_{\prec}(J_{G}))=\{u_{\pi}x_{i}y_{j}\mid \pi\,\, \text{is an admissible path from}\,\, i\,\, \text{to}\,\, j\,\, \text{with}\,\, i<j\}.$$
}
\end{remark}

To get the lower bound of $\mathrm{reg}(S/J_G)$, where $G = \ov{K_m \star_r K_n} $, we use the lower bound of the regularity of the initial ideal of $J_G$ with respect to the term order mentioned in \Cref{order}. Since the labelling on the vertices does not change the regularity of $J_G$, we use the following labelling for our purpose. Note that $m,n \geq r$. Consider
\begin{enumerate}
\item[$\bullet$] $V(K_m) = \{1,3, \ldots, 2r-1, 2r+1, 2r+2, \ldots, 2r+(m-r)\};$
\item[$\bullet$] $V(K_n) = \{2,4, \ldots, 2r, 2r+(m-r)+1, 2r+(m-r)+2, \ldots, m+n\}.$
\end{enumerate}
Thus, the edge sets of $K_m \star_r K_n$ and $\ov{K_m \star_r K_n}$ will be the following.
\begin{enumerate}
\item[$\bullet$] $E(K_m \star_r K_n) = E(K_m) \cup E(K_n) \cup \{\{2i-1,2i\} \mid 1 \leq i \leq r \};$
\item[$\bullet$] $E(\ov{K_m \star_r K_n}) = E(K_m \star_r K_n) \cup \{\{i+2,m+n+i\} \mid 1 \leq i \leq 2r-2\}.$
\end{enumerate}

\begin{lemma}\label{lemstarlower}
Let $G = \ov{K_m \star_r K_n}$. Then $\mathrm{reg}(S/J_G) \geq 2r-1$.
\end{lemma}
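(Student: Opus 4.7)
The plan is to pass to the initial ideal and exhibit an induced matching of size $2r-1$ in the associated hypergraph. First I would invoke \cite[Corollary 2.7]{cv20}: since $\ini_\prec(J_G)$ is square-free by \Cref{order}, one has $\reg(S/J_G) = \reg(S/\ini_\prec(J_G))$. Viewing $\ini_\prec(J_G)$ as the edge ideal of a simple hypergraph $\mathcal{H}$ on the variables $x_1,\ldots,x_n,y_1,\ldots,y_n$, \Cref{propim} then tells me that it suffices to exhibit an induced matching of $\mathcal{H}$ of size $2r-1$. Using the relabeling specified before the lemma, I would take
$$M = \{\{x_1, y_2\}\} \cup \{\{x_{k+2}, y_{m+n+k}\} : 1 \leq k \leq 2r-2\},$$
where $\{x_1, y_2\}$ arises from the trivial admissible path on the crossing edge $\{1,2\}$ and the other $2r-2$ edges arise from the trivial admissible paths on the whisker edges $\{k+2, m+n+k\}$. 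These $2r-1$ size-$2$ edges of $\mathcal{H}$ are pairwise disjoint, so once $M$ is shown to be induced, \Cref{propim} immediately gives $\reg(S/J_G) \geq \sum_{e \in M}(|e|-1) = 2r-1$.

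Let $U$ be the union of the supports of the edges of $M$. The main task is to show that the only minimal generators of $\ini_\prec(J_G)$ whose support lies in $U$ are those corresponding to $M$. Any minimal generator has the form $u_\pi x_i y_j$ for an admissible path $\pi: i = i_0, i_1, \ldots, i_s = j$ (with $i<j$); requiring its support to lie in $U$ forces $i \in \{1,3,4,\ldots,2r\}$ and $j \in \{2, m+n+1, \ldots, m+n+2r-2\}$, together with, for each intermediate $i_k$, either $i_k>j$ with $x_{i_k}\in U$, or $i_k<i$ with $y_{i_k}\in U$. Since $m+n+k>2r$ for every $k$, the first option forces $i_k \leq 2r$ and so is only compatible with $j=2$; the second option forces $i_k = 2$.

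The hard part will be the case analysis on $j$. If $j=2$ (so $i=1$), every intermediate lies in $\{3,4,\ldots,2r\}$; but any non-trivial admissible path from $1$ to $2$ must traverse some crossing edge $\{2i'-1,2i'\}$ with $2 \leq i' \leq r$, and then the vertex set $\{1, 2i'-1, 2i', 2\}$ induces a $4$-cycle in $G$ (the would-be diagonals $\{1, 2i'\}$ and $\{2i'-1, 2\}$ fail to be crossings since $i' \neq 1$), contradicting the no-induced-cycle condition of admissibility; hence only the trivial edge $\{1,2\}$ survives. If $j=m+n+k$, the whisker vertex $m+n+k$ has unique neighbor $k+2$, so $k+2$ must be the penultimate vertex of $\pi$; in a non-trivial path, $k+2$ becomes an intermediate forcing $k+2<i$ and $y_{k+2}\in U$, but $k+2 \in \{3,\ldots,2r\}$ lies outside $\{2, m+n+1, \ldots, m+n+2r-2\}$, a contradiction. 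So only the trivial whisker paths survive, completing the verification and establishing the bound $\reg(S/J_G)\geq 2r-1$.
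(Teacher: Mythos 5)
Your proof is correct and takes essentially the same route as the paper: both pass to $\ini_{\prec}(J_G)$ via \cite[Corollary 2.7]{cv20}, exhibit the identical induced matching $\{x_1,y_2\}\cup\{\{x_{k+2},y_{m+n+k}\}\mid 1\leq k\leq 2r-2\}$, and verify inducedness by analyzing admissible paths. The only (immaterial) difference is organizational --- you split the case analysis on the value of $j$ and kill non-trivial paths from $1$ to $2$ with an induced $4$-cycle, whereas the paper splits on whether $i_1<i$ or $i_1>j$ and uses a chord/triangle contradiction.
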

\begin{proof}
Note that $r \geq 2$ by definition of $\ov{K_m \star_r K_n}$. First, we will investigate $\reg(S/\ini_{\prec}(J_G))$, where $\ini_{\prec}(J_G)$ denotes the initial ideal of $J_G$ with respect to the term order $\prec$ defined in \Cref{order}.Let $\mathcal{H}$ denote the corresponding hypergraph of $\ini_{\prec}(J_G)$, i.e., $I(\mathcal{H}) = \ini_{\prec}(J_G)$. Take the following set $\mathcal{M}$ of the edges of the hypergraph $\mathcal{H}$:
$$\mathcal{M} = \{\{x_1,y_2\}, \{x_{i+2},y_{m+n+i}\} \mid 1 \leq i \leq 2r-2\}.$$ 
We claim that $\mathcal{M}$ forms an induced matching in $\mathcal{H}$. It is clear that $\mathcal{E}_1 \cap \mathcal{E}_2 = \emptyset$ for any two distinct edges $\mathcal{E}_1, \mathcal{E}_2 \in \mathcal{M}$. Let $L = \cup_{\mathcal{E}\in\mathcal{M}} \mathcal{E}= \{x_1,y_2\} \cup \{x_{i+2}, y_{m+n+i} \mid 1 \leq i \leq 2r-2\}$. Suppose there exists $\mathcal{E} \in E(\mathcal{H})$ such that $\mathcal{E} \notin \mathcal{M}$ but $\mathcal{E} \subset L$. Note that $\{x_{i+2},y_2\} \notin E(\mathcal{H})$ as $i+2 > 2$. Also, $\{x_i,y_{m+n+j}\} \notin E(\mathcal{H})$ if $i \neq j+2$ as the only neighbour of $m+n+j$ in $G$ is $j+2$. Therefore, $\vert \mathcal{E} \vert > 2.$ Now, corresponding to $\mathcal{E}$, there exists an admissible path $\pi: i = i_0, i_1, \ldots, i_k = j$ in $G$ such that $X_{\mathcal{E}}=u_{\pi} x_iy_j$. Therefore, $i \in \{1, 3,4,5 \ldots, 2r\} = A$ and $j \in \{2,m+n+1, m+n+2, \dots, m+n+(2r-2)\} = B$. Now, $i_1 < i$ or $i_1 > j$ as $\pi$ is an admissible path in $G$. Also, we have $i < j$. Now, we consider two cases: \par 
\noindent \textbf{Case-1:} Suppose $i_1 < i$. Then $y_{i_1}$ divides $u_{\pi} x_iy_j$. This implies $y_{i_1} \in \mathcal{E}$. Since $\mathcal{E} \subset L$, $y_{i_1} \in L$. Also, we have $y_j \in \mathcal{E} \subset L$. Since $i_1 < i < j$ and $y_{i_1},y_j \in L$, the only choice of $i_1$ is $2$ as the other vertices of $B$ are free vertices. Clearly, $\{i_1,j\} \notin E(G)$ and so, $i_2 \neq j$. Also, $i_2 \neq 1$ as $\mathcal{E} \subset L$. Therefore, $i_2$ and $i$ are two neighbours of $i_1 = 2$ other than $1$. Thus, $\{i_2, i\} \in E(G)$ by the labelling of $G$, and this gives a contradiction as $\pi$ is an admissible path. \par 
\noindent \textbf{Case-2:} Suppose $i_1 > j$. Then $x_{i_1}$ divides $u_{\pi} x_iy_j$. This implies $x_{i_1} \in \mathcal{E} \subset L$, and thus, $i_1 \in A$. Since $i < j < i_1$ and $x_i,x_{i_1} \in L$, we get $i, i_1 \in A$. If $j=2$, then $i=1$, which is not possible as $\vert \mathcal{E}\vert>2$ and $\{1,2\}\in E(G)$. Therefore, $j \in B\setminus \{2\}$. Note that there is no element in $A$, which can be greater than any element of $B \setminus \{2\}$. Hence, $i_1 \ngtr j$.\par

 Both cases show that such an admissible path $\pi$ does not exist. Therefore, $\mathcal{M}$ forms an induced matching in $\mathcal{H}$. Therefore, by \Cref{propim}, we have
 $$\mathrm{reg}(S/ \ini_{\prec}(J_G)) \geq \left(\sum_{\mathcal{E} \in \mathcal{M}} \vert \mathcal{E} \vert - 1\right) = 2(r-1)+1 = 2r-1.$$
Since $\ini_{\prec}(J_G)$ is a square-free monomial ideal, $\reg(S/J_G)=\reg(S/\ini_{\prec}(J_G))$ by \cite[Corollary 2.7]{cv20}. Hence, $\reg(S/J_G)\geq 2r-1$.
\end{proof}

\begin{theorem}\label{thmregcon}
    Let $G = \ov{K_m \star_r K_n}$ and $r\geq 3$. Then $\mathrm{reg}(S/J_G) =2r-1$.
\end{theorem}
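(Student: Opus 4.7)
The plan is to prove the equality by sandwiching: the upper bound $\reg(S/J_G) \leq 2r-1$ has already been established in Lemma \ref{lemstarupper} (for $r \geq 3$), and the lower bound $\reg(S/J_G) \geq 2r-1$ has been established in Lemma \ref{lemstarlower} (valid for all admissible $r \geq 2$). Combining these two inequalities immediately yields the desired equality, so at this stage the proof is essentially a one-line citation of the two preceding lemmas.

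Nevertheless, it is worth recalling the contrasting strategies used by the two halves, since they clarify why the equality is sharp. The upper bound proceeds by induction on $r$ after reducing to the cut vertex $v = r-1$ and applying the Ohtani exact sequence \eqref{eqexact}. The three terms appearing in that sequence are bounded using the gluing formula from Remark \ref{remregblock}, the block-graph identity $\reg(S/J_H) = b(H)$, and the inductive hypothesis; the regularity lemma then packages these bounds into $\reg(S/J_G) \leq 2r-1$. The base case $r=3$ is handled by the same method starting from Theorem \ref{thmstar2}.

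The lower bound is more combinatorial in flavor: one passes to the lex initial ideal $\ini_{\prec}(J_G)$ from Remark \ref{order}, whose associated hypergraph $\mathcal{H}$ admits the explicit induced matching $\mathcal{M} = \{\{x_1,y_2\}\} \cup \{\{x_{i+2}, y_{m+n+i}\} : 1 \leq i \leq 2r-2\}$, of size $2r-1$. Proposition \ref{propim} then yields $\reg(S/\ini_{\prec}(J_G)) \geq 2r-1$, and \cite[Corollary 2.7]{cv20} transfers the estimate back to $\reg(S/J_G)$ since the initial ideal is square-free. No genuinely new obstacle arises in assembling the theorem from these pieces; the real work has been carried out in the two preceding lemmas, and the main step here is just to observe that the bounds match exactly.
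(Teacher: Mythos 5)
Your proposal is correct and matches the paper's own proof exactly: Theorem \ref{thmregcon} is obtained by combining the upper bound of Lemma \ref{lemstarupper} with the lower bound of Lemma \ref{lemstarlower}. The additional recap of how those two lemmas are proved is accurate but not needed for this step.
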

\begin{proof}
    The result follows from \Cref{lemstarupper} and \Cref{lemstarlower}.
\end{proof}

\noindent \textbf{Acknowledgements:} Kamalesh Saha wants to thank the National Board for Higher Mathematics (India) for the financial support through the NBHM Postdoctoral Fellowship and Chennai Mathematical Institute for providing a good research environment. An Infosys Foundation fellowship partially supports Kamalesh Saha.

\printbibliography

\end{document}